\newcommand*\LyXZeroWidthSpace{\hspace{0pt}}
\numberwithin{equation}{section}
\numberwithin{figure}{section}
\theoremstyle{plain}
\newtheorem*{thm*}{\protect\theoremname}
\theoremstyle{plain}
\newtheorem{thm}{\protect\theoremname}
\theoremstyle{plain}
\newtheorem{conjecture}[thm]{\protect\conjecturename}
\theoremstyle{definition}
\newtheorem{defn}[thm]{\protect\definitionname}
\theoremstyle{remark}
\newtheorem*{rem*}{\protect\remarkname}
\theoremstyle{plain}
\newtheorem{lem}[thm]{\protect\lemmaname}
\theoremstyle{plain}
\newtheorem{prop}[thm]{\protect\propositionname}
\theoremstyle{remark}
\newtheorem{rem}[thm]{\protect\remarkname}
\providecommand{\conjecturename}{Conjecture}
\providecommand{\definitionname}{Definition}
\providecommand{\lemmaname}{Lemma}
\providecommand{\propositionname}{Proposition}
\providecommand{\remarkname}{Remark}
\providecommand{\theoremname}{Theorem}
\begin{document}
\title{\noindent Local Unknottedness of Planar Lagrangians with Boundary}
\maketitle
\begin{center}
Zi-Xuan Wang
\par\end{center}

\begin{center}
Uppsala University
\par\end{center}

\noindent \newpage{}

\section*{Abstract}

\noindent We show the smooth version of the nearby Lagrangian conjecture
for the 2-dimensional pair of pants and the Hamiltonian version for
the cylinder. In other words, for any closed exact Lagrangian submanifold
of $T^{*}M$, there is a smooth or Hamiltonian isotopy, when $M$
is a pair of pants or a cylinder respectively, from it to the 0-section.
For the cylinder we modify a result of G. Dimitroglou Rizell for certain
Lagrangian tori to show that it gives the Hamiltonian isotopy for
a Lagrangian cylinder. For the pair of pants, we first study some
results from pseudo-holomorphic curve theory and the planar Lagrangian
in $T^{*}\mathbb{R}^{2}$, then finally using a parameter construction
to obtain a smooth isotopy for the pair of pants. 

\noindent \newpage{}

\section*{Acknowledgement}

\noindent First of all, I would like to express my gratitude to my
supervisor Georgios Dimitroglou Rizell for his countless help throughout
this project, from studying preliminaries, suggesting the topic, answering
my sometimes boundless questions, to consummate the details. This
is my first time writing a mathematical research paper and thanks
to him, it has really been a pleasant and improving experience. Also,
I really appreciate the help from the instructors of all courses that
I have taken. You have really broadened my horizon and deepened my
understanding in mathematics. Finally, thank my friends, my family
and Gästrike-Hälsinge Nation for the support, either psychological
or financial, in various formats. 

\noindent \newpage{}

\noindent \tableofcontents{}

\noindent \newpage{}

\section{Introduction}

\subsection{Background}

\noindent A smooth manifold $\left(W,\omega\right)$ is called a symplectic
manifold if it is equipped with a closed and non-degenerate 2-form
$\omega$. It has to be of even-dimension because of the non-degeneracy
of $\omega$. For an n-dimensional smooth manifold $M$, its cotangent
bundle $\left(T^{*}M,-d\theta\right)$, where $\theta=\sum_{i=1}^{n}p_{i}dq_{i}$
is the tautological 1-form, is naturally a symplectic manifold.

\noindent Lagrangian submanifolds are smooth half-dimensional submanifolds
of symplectic manifolds on which the symplectic form vanishes. For
a cotangent bundle $\left(T^{*}M,-d\theta\right)$, its section $\left(m,\alpha\right)$
is Lagrangian if the 1-form $\alpha$ satisfies $d\alpha=0$. Moreover,
if $\alpha$ is exact, the section is called an exact Lagrangian.
Lagrangian submanifolds have been shown to exhibit many rigidity phenomena
since Gromov\textquoteright s pseudoholomorphic curve theory, such
as the result by Eliashberg-Polterovich in \cite{key-10} numbered
as Theorem 9 here
\begin{thm*}
\noindent Any flat at infinity Lagrangian embedding of $\mathbb{R}^{2}$
into the standard symplectic $\mathbb{R}^{4}$ is isotopic to the
flat embedding via an ambient compactly supported smooth isotopy of
$\mathbb{R}^{4}$.
\end{thm*}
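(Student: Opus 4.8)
The plan is to follow Gromov's method of filling by pseudoholomorphic curves. The aim is to exhibit a singular foliation of $\mathbb{R}^{4}=\mathbb{C}^{2}$ by $J$-holomorphic leaves attached to $L$ that agrees with an explicit linear model outside a compact set; such a foliation furnishes ``straightening'' coordinates in which $L$ takes a normal form relative to the flat plane, and since the foliation is standard near infinity the resulting identification is the identity there. Because only a \emph{smooth} conclusion is required, the final passage from this normal form to an ambient isotopy is comparatively soft, and essentially the entire weight of the argument sits in the holomorphic curve theory.

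Concretely, I would identify $(\mathbb{R}^{4},\omega)$ with $(\mathbb{C}^{2},\omega_{\mathrm{std}})$ so that the flat Lagrangian is the real plane $\mathbb{R}^{2}\subset\mathbb{C}^{2}$, and choose an $\omega$-compatible almost complex structure $J$ that equals the integrable $i$ outside a compact set and renders $L$ totally real. Since $H^{1}(\mathbb{R}^{2})=0$ the Lagrangian $L$ is automatically exact, so it bounds \emph{no} nonconstant compact holomorphic disk (every such disk would have zero area, hence be constant); the relevant leaves are instead properly embedded holomorphic half-planes whose boundary lines lie on $L$. In the flat region these are modelled on the linear half-planes $w\mapsto(w,aw+b)$ over the closed upper half-plane, with $a,b\in\mathbb{R}$, whose boundary lines $\{(x,ax+b):x\in\mathbb{R}\}$ lie on $\mathbb{R}^{2}$; these assemble into an explicit model foliation $\mathcal{F}_{0}$ near infinity, and the task is to continue it inward through the compact region where $L$ is knotted.

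The analytic core, and the step I expect to be the main obstacle, is the global control of the moduli space $\mathcal{M}$ of such $J$-holomorphic half-planes with boundary on $L$. Three things must be secured. First, regularity: in dimension four, leaves of nonnegative index are automatically transverse (Hofer--Lizan--Sikorav), so $\mathcal{M}$ is a smooth manifold of the expected dimension, while positivity of intersections keeps distinct leaves disjoint off $L$. Second, compactness without loss of leaves: the asymptotically standard behaviour at infinity confines and controls the leaves, $\pi_{2}(\mathbb{C}^{2})=0$ excludes sphere bubbling, and exactness excludes compact disk bubbling. Third, the foliation property: an open--closed argument shows that the evaluation map is a diffeomorphism onto the filled region, openness coming from the implicit function theorem and closedness from compactness, anchored near infinity by the maximum principle, which pins the leaves to the linear model $\mathcal{F}_{0}$. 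Propagating the foliation across the non-flat core without collision or degeneration of leaves is precisely where the full strength of the curve theory is needed.

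Finally, I would use the foliation to present $L$ in normal form: the straightening coordinates exhibit $L$ as a graph over the flat plane $\mathbb{R}^{2}$ that vanishes outside a compact set, since $L$ is already flat there. The linear rescaling of this graph to zero is then a compactly supported smooth isotopy from $L$ to the flat plane, which extends to an ambient isotopy of $\mathbb{R}^{4}$. Because the isotopy is produced explicitly in this way, one avoids any appeal to the connectivity of the compactly supported diffeomorphism group of $\mathbb{R}^{4}$, which is unavailable in dimension four; this also explains why the conclusion is a smooth rather than a Hamiltonian isotopy.
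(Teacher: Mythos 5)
Your route is genuinely different from the paper's, and the difference matters. The paper never considers holomorphic curves with boundary on $L$ at all: it first uses Moser's trick to perturb $L$ into a surface that is \emph{symplectic} for $\omega_{\varepsilon}=\omega+\varepsilon\,dq_{1}\wedge dq_{2}$ (Lemma 11), then interpolates the form to a multiple of Fubini--Study and compactifies $\mathbb{R}^{4}$ to $\mathbb{CP}^{2}$ (Lemmas 12--13), so that $L$ becomes a closed embedded symplectic sphere $\Sigma$ in the class of $\mathbb{CP}^{1}$ meeting the line at infinity $\Gamma_{\infty}$ at one point $P$. This reduces everything to \emph{closed} curves, where the needed theory (Theorem 8: unique degree-one $J$-line through two points, smooth in $J$ and the points) is clean. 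Your proposal instead keeps $L$ Lagrangian and fills with properly embedded $J$-holomorphic half-planes with boundary on $L$; this is much closer to the Hamiltonian-version proofs (Eliashberg--Polterovich's Annals paper, and the solid-torus filling of Dimitroglou Rizell used in Section 4 here), and the Fredholm/compactness theory for non-compact curves with non-compact boundary that you defer to ``asymptotic control at infinity'' is not off-the-shelf; it requires a punctured-curve (SFT-type) setup or a compactification, and that is where most of the real work in those papers lives. Note also that your leaves cannot form an honest foliation near $L$: already in the linear model any two boundary lines $\{(x,ax+b)\}$ and $\{(x,a'x+b')\}$ intersect, so the structure is a two-parameter filling with boundaries sweeping $L$ with multiplicity.

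The fatal gap, however, is the final step. Grant the filling adapted to $L$. Straightening it against the model $\mathcal{F}_{0}$ produces a diffeomorphism $\Phi$ of $\mathbb{R}^{4}$, equal to the identity near infinity, carrying each leaf onto its model leaf; since $L$ is precisely the union of the leaf boundaries and $\mathbb{R}^{2}$ is the union of the model boundaries, $\Phi(L)=\mathbb{R}^{2}$ exactly. So in your straightened coordinates $L$ is not ``a graph over the flat plane vanishing outside a compact set'' --- it \emph{is} the flat plane, there is nothing to rescale, and all of the knotting has been absorbed into $\Phi$. Producing an ambient isotopy from $\Phi$ is then exactly the problem of connecting $\Phi$ to the identity inside $\mathrm{Diff}_{c}(\mathbb{R}^{4})$, which you yourself note is unavailable in dimension four. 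What is missing is a one-parameter mechanism, and that is precisely what the paper's proof supplies: a path $J(t)$ of tamed almost complex structures keeping $\Gamma_{\infty}$ holomorphic, with $J(0)=J_{0}$ standard and $\Sigma$ being $J(1)$-holomorphic, together with a path of points $Q(t)$ from $\gamma_{0}$ to $\Sigma$; Gromov's uniqueness theorem then yields for each $t$ a unique $J(t)$-holomorphic line $C(t)$ through $P$ and $Q(t)$, depending smoothly on $t$, and the family $\{C(t)\}$ \emph{is} the isotopy, with positivity of intersection forcing $C(t)$ to stay transverse to $\Gamma_{\infty}$ at the single point $P$. If you insist on curves with boundary on $L$, the isotopy must likewise be built into the curve theory itself (as is done for the cylinder in Section 4, at considerable cost), not extracted afterwards from a single filling.
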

\noindent In the smooth category, an isotopy is a family of diffeomorphisms
$\rho_{t}$ such that $\rho_{0}=Id$. This family is generated by
a family of vector fields $\left\{ X_{t}\right\} $ s.t. 
\[
\frac{d}{dt}\rho_{t}=X_{t}\left(\rho_{t}\right)
\]
If each $X_{t}$ is Hamiltonian, i.e. there exists a smooth family
of functions $H_{t}:T^{*}M\rightarrow\mathbb{R}$, called Hamiltonian
functions s.t. 
\[
\iota\left(X_{t}\right)\omega=dH_{t}
\]
this isotopy is called a Hamiltonian isotopy. 

\noindent Eliashberg-Polterovich considered the case of a Lagrangian
disc, and my work here concerns a generalization: the case of a Lagrangian
pair of pants. The main theorem of this article can be stated as 
\begin{thm*}
\noindent Any flat outside a compact set Lagrangian embedding of $\mathbb{R}^{2}-\left\{ 0,1\right\} $
into the standard symplectic cotangent bundle of the same manifold
is isotopic to the flat embedding via an ambient compactly supported
smooth isotopy of 
\[
T^{*}\left(\mathbb{R}^{2}-\left\{ 0,1\right\} \right)=\mathbb{R}^{2}\times\left(\mathbb{R}^{2}-\left\{ 0,1\right\} \right)=\mathbb{R}^{4}-\left(\mathbb{R}^{2}\times\left\{ 0\right\} \cup\mathbb{R}^{2}\times\left\{ 1\right\} \right)
\]
\end{thm*}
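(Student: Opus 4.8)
The plan is to cap off the two punctures, reduce to the Eliashberg--Polterovich theorem (Theorem 9 above) on $\mathbb{R}^{4}$, and then promote the resulting ambient isotopy to one supported away from the two deleted cotangent fibres, so that it restricts to a compactly supported isotopy of $T^{*}(\mathbb{R}^{2}-\{0,1\})$. Write $P=\mathbb{R}^{2}-\{0,1\}$ and let $L\subset T^{*}P=\mathbb{R}^{4}-(\mathbb{R}^{2}\times\{0\}\cup\mathbb{R}^{2}\times\{1\})$ be the given embedding. Being flat outside a compact set, $L$ equals the zero-section over small discs $D_{0},D_{1}$ about the punctures and near infinity, and, lying in $T^{*}P$, it is disjoint from the two removed fibres. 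Adjoining the zero-section points $(0,0)$ and $(0,1)$ therefore yields a smooth Lagrangian plane $\hat{L}\cong\mathbb{R}^{2}\subset T^{*}\mathbb{R}^{2}=\mathbb{R}^{4}$ that is flat at infinity and meets each removed fibre in exactly one point, lying on the zero-section. It now suffices to straighten $\hat{L}$ in $\mathbb{R}^{4}$ by an ambient isotopy that is the identity near these two fibres.

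The graphical case is immediate and dictates the strategy. Suppose $\hat{L}$ is the graph of a closed $1$-form $\alpha$. Then $\alpha$ vanishes over $D_{0}\cup D_{1}$ and near infinity; as the loops around the two punctures generate $H_{1}(P)$ and $\alpha=0$ there, every period of $\alpha|_{P}$ vanishes, so $\alpha=df$ with $\mathrm{supp}(df)$ contained in a compact $K\subset P$ bounded away from $0,1$ and from infinity. The fibre translation
\[
\Psi_{s}(p,q)=(p-s\,df_{q},\;q),\qquad s\in[0,1],
\]
satisfies $\Psi_{0}=\mathrm{id}$, $\Psi_{1}(\hat{L})=\{(0,q)\}$, and is the identity wherever $df_{q}=0$, hence over $D_{0}\cup D_{1}$ and near infinity. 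It thus descends to a compactly supported isotopy of $T^{*}P$, proving the statement in this case. Consequently it is enough to carry $\hat{L}$ to a graph by an isotopy whose support projects to a compact subset of $P$ avoiding $0$ and $1$.

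For a general, possibly folded $\hat{L}$ I would invoke Theorem 9, which supplies a compactly supported ambient smooth isotopy $\Phi_{t}$ of $\mathbb{R}^{4}$ with $\Phi_{0}=\mathrm{id}$ and $\Phi_{1}(\hat{L})$ the flat embedding; in particular $\hat{L}$ is unknotted. \textbf{The main obstacle} is that $\Phi_{t}$ is only an abstract isotopy of $\mathbb{R}^{4}$ and need not fix neighbourhoods of the removed fibres: its support may project onto the punctures, and then it fails to restrict to the punctured space. To remove this I would re-run the straightening as a parameter construction localized over the base, exploiting that $\hat{L}$ is already flat over $D_{0},D_{1}$ so that all non-graphical behaviour sits over the compact region $K$. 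Concretely, I would choose a base cutoff $\chi$ vanishing near $0,1$ and near infinity and equal to $1$ on $K$, and damp the generating vector field of the straightening by $\chi$, producing a flow that is the identity near both fibres. The step I expect to be hardest is checking that this localization preserves the endpoint condition --- that $\Phi_{1}(\hat{L})$ remains a graph supported over $K$; this requires opening up the pseudoholomorphic-curve input behind Theorem 9 to arrange the straightening with support already projecting into a compact subset of $P$, rather than using it as a black box. Once $\hat{L}$ is graphical with support over $K$, the fibre translation $\Psi_{s}$ finishes the isotopy inside $T^{*}P$.
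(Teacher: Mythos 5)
Your reduction to the graphical case is fine (modulo cutting off the fibre translation $\Psi_{s}$ at large $|p|$, since as written its support is $\mathbb{R}^{2}\times\mathrm{supp}(df)$, which is not compact), and you have correctly located the obstacle: the Eliashberg--Polterovich isotopy of $\mathbb{R}^{4}$ knows nothing about the two deleted fibres. But your proposed remedy --- damping the generating vector field of $\Phi_{t}$ by a base cutoff $\chi$ --- does not work, and the step you defer (``opening up the pseudoholomorphic-curve input'') is not a technical check: it is the entire content of the proof. Cutting off the generating vector field destroys the endpoint condition: points of $\hat{L}$ whose trajectories pass through the region where $\chi<1$ no longer complete their motion, so the time-one map of the damped flow need not take $\hat{L}$ to the flat plane, nor even to a graph, and there is nothing left to hand to $\Psi_{s}$. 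Moreover, the support of the EP isotopy genuinely can sweep over the punctures: that isotopy is traced out by a family of pseudo-holomorphic lines $C(t)$ in $\mathbb{CP}^{2}$, and the only divisor these lines are forced to respect is $\Gamma_{\infty}$, via positivity of intersection, which is available there precisely because $\Gamma_{\infty}$ is $J(t)$-holomorphic for all $t$. The fibres over $0$ and $1$ are Lagrangian, hence can never be $J$-holomorphic for any tamed $J$ (a tamed $J$-complex line satisfies $\omega(v,Jv)>0$, while $\omega$ vanishes on a Lagrangian), so positivity of intersection gives no control at all near them; no choice made inside the EP framework will localize the support away from these fibres.

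This is exactly why the paper does not cap off and cite Theorem 9, but instead redoes the whole construction with the fibres built in from the start (Lemma 18 and Theorem 19). It first tilts the two Lagrangian fibres into symplectic planes $\Gamma_{0}^{t},\Gamma_{1}^{t}$ (rotating them slightly so they become symplectic for $\omega_{\varepsilon}$ while still meeting the surface transversely in one point each), compactifies so that they become degree-one symplectic spheres $\Gamma_{0},\Gamma_{1}\subset\mathbb{CP}^{2}$, and then restricts attention to paths of almost complex structures $J_{t}$ keeping $\Gamma_{0},\Gamma_{1},\Gamma_{\infty}$ holomorphic for all $t$, so that positivity of intersection controls the moving line at all three divisors. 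Even then a further problem appears for which your plan has no counterpart: a degree-one $J_{t}$-holomorphic sphere is determined by two points, so one can pin the moving line at $\infty$ and $pt_{0}$, but its intersection with $\Gamma_{1}$ is then out of one's control; the paper resolves this with a second parameter (an auxiliary family moving $\Gamma_{1}$ through the pencil of $J_{t}$-lines based at $x_{0}$), producing a two-parameter family $g(t,s,x)$ and extracting the desired isotopy from a path in the $(t,s)$-square. Without these ingredients --- symplectization of the fibres, holomorphicity of all three divisors along the whole path, and the two-parameter trick --- your outline reduces the theorem to the assertion that the EP isotopy can be arranged with support projecting away from $0$ and $1$, which is essentially equivalent to what is to be proved. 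So the proposal has a genuine gap.
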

\noindent This result is a direct consequence of Theorem 18, whose
formulation is adapted to the strategy of the proof. 

\noindent The same methods are expected to show that the analogous
result holds for $T^{*}\left(\mathbb{R}^{2}-\left\{ p_{1},\cdots,p_{m}\right\} \right)$,
i.e. the complement of m points. However, the special motivation behind
studying pair of pants is that a closed surface of genus $g\geq2$
admits a pair of pants decomposition. If one can show, via stretching
the neck, that an exact Lagrangian surface inside a cotangent bundle
is isotopic to pieces in a pair of pants decomposition, where the
pieces have standard behavior near their boundaries, then this plus
the result for the pair of pants implies that any exact Lagrangian
surface is smoothly isotopic to the zero section. 

\noindent This can be seen as a strategy to partially prove the following
conjecture from 1986 due to V.I. Arnol'd:
\begin{conjecture}
\noindent (The nearby Lagrangian conjecture) Let M be a closed manifold.
Any closed exact Lagrangian submanifold of $T^{*}M$ is Hamiltonian
isotopic to the 0-section. 
\end{conjecture}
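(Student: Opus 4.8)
The plan is necessarily a partial one: the conjecture in full generality, for an arbitrary closed manifold $M$, is a deep open problem, and the methods developed here gain traction only in the $2$-dimensional case, where $M$ is a closed surface. I will therefore aim at the conjecture for closed exact Lagrangian surfaces $L\subset T^{*}\Sigma_{g}$, and only in the weaker smooth form first, reserving the Hamiltonian upgrade for a later step. The sphere ($g=0$) and the torus ($g=1$) are accessible by other means (e.g. Hind's work on the sphere), and the cylinder case is handled in this paper, so the genuinely new input is the case $g\geq 2$, where $\Sigma_{g}$ admits a pair of pants decomposition. The strategy is to cut $\Sigma_{g}$ along the $3g-3$ simple closed curves of such a decomposition, to show by a neck-stretching argument that $L$ correspondingly decomposes into Lagrangian pieces sitting over the individual pants with standard behaviour near their boundary, and then to straighten each piece to the zero section using the main theorem of this paper (Theorem 18).

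First I would fix a pair of pants decomposition of $\Sigma_{g}$, choose disjoint annular neighbourhoods of the cutting curves, and insert a cylindrical contact-type neck in $T^{*}\Sigma_{g}$ over each annulus. Stretching the neck in the sense of symplectic field theory and invoking SFT compactness, I would analyse the limit of a family of pseudoholomorphic curves with boundary on a suitable perturbation of $L$. The goal of this analysis is twofold: to show that over each neck $L$ degenerates into conormal-type cylinders, so that $L$ genuinely splits into pieces $L_{1},\dots,L_{2g-2}$, each an exact Lagrangian in the cotangent bundle of one pair of pants; and to show that each $L_{j}$ is flat, or at least standard, near the cylindrical ends produced by the cutting curves, so that it satisfies the flat-outside-a-compact-set hypothesis of Theorem 18.

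Granting this decomposition, I would apply Theorem 18 to each $L_{j}$ to obtain a compactly supported smooth isotopy of $T^{*}(\text{pants}_{j})$ carrying $L_{j}$ to the zero section, with support disjoint from the cylindrical ends. Since the isotopies are trivial near the necks, they glue across the cutting curves into a single global smooth isotopy of $T^{*}\Sigma_{g}$ taking $L$ to the zero section, establishing the smooth form of the conjecture for $g\geq 2$. To upgrade this to a Hamiltonian isotopy one would use exactness: a path through exact Lagrangians carries a well-defined flux/action obstruction, and one would have to show this obstruction vanishes so that the path can be reparametrised as a Hamiltonian one, as is done for the cylinder here.

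The hard part will be the neck-stretching step. Controlling the SFT compactness limit --- ruling out bubbling and degenerate buildings, proving the requisite transversality for curves with Lagrangian boundary, and showing that the limit building really certifies that $L$ splits as a clean union of pieces with \emph{standard} boundary behaviour, rather than merely that it is homologically concentrated near the zero section --- is exactly the delicate analytic input, and it is where the whole strategy could fail. A secondary difficulty is the Hamiltonian upgrade in the glued-together setting, and of course the entire approach is special to surfaces; the higher-dimensional conjecture would require completely different ideas.
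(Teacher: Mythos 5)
The statement you were asked to prove is Conjecture 1 of the paper, and the paper does not prove it: it is Arnol'd's nearby Lagrangian conjecture, stated as an open problem and used purely as motivation. What the paper actually establishes are two special, local cases --- the smooth version for the pair of pants (Theorem 18) and the Hamiltonian version for the cylinder (Theorem 17). Your proposal is, almost verbatim, the strategy the paper itself sketches in its introduction for closed surfaces of genus $g\geq 2$: fix a pair of pants decomposition, show via neck stretching that the exact Lagrangian splits into pieces with standard behaviour near the cutting curves, and then straighten each piece using the pair-of-pants theorem. So you have correctly reconstructed the intended program, but it is a program, not a proof, and the gap you flag yourself is exactly the reason the conjecture remains open: the SFT neck-stretching step --- proving that the limit building forces $L$ to split into \emph{clean} pieces that are genuinely flat (not just homologically standard) near the necks, so that the ``flat outside a compact set'' hypothesis of Theorem 18 is met --- has not been carried out here or anywhere else. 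Nothing in the paper supplies it, so the proposal cannot be completed from the paper's results.

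A secondary, more concrete error: your proposed Hamiltonian upgrade via a flux/action argument does not apply to the output of Theorem 18 (or of Eliashberg--Polterovich's Theorem 9). Those theorems produce \emph{ambient smooth} isotopies of the cotangent bundle; the intermediate images of $L$ under such an isotopy need not be Lagrangian at all, so there is no path of exact Lagrangians on which a flux obstruction could even be defined. The standard fact that an isotopy \emph{through exact Lagrangians} can be generated by a Hamiltonian is of no use until one first shows the smooth isotopy can be replaced by a Lagrangian one, which is a separate and nontrivial problem (for $\mathbb{R}^{2}$ this is precisely the content of the second Eliashberg--Polterovich paper \cite{key-7}, which uses a different construction with families of hypersurfaces). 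Likewise, the cylinder case in this paper is not obtained by upgrading a smooth isotopy: Theorem 17 is proved directly, by adapting the holomorphic-foliation argument of \cite[Theorem B]{key-9} to produce a Hamiltonian isotopy from the start. So even granting the neck-stretching decomposition, your argument would yield only the smooth form of the conjecture for surfaces of genus $g\geq 2$, which is indeed all the paper's introduction claims the strategy would give.
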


\noindent By demanding that the exact Lagrangian agrees with $M=\mathbb{R}^{2}$
outside a compact subset, the situation is similar to that of a closed
manifold. So do $M=\mathbb{R}^{n}$. So far, the conjecture has only
been established in the cases when $M=\mathbb{R}^{1},S^{1},\mathbb{R}^{2},\mathbb{R}^{1}\times S^{1},\mathbb{T}^{2},S^{2}$.
Conjecture 1 for $M=\mathbb{R}^{2}$ can be rephrased as ``local
planar Lagrangians in $T^{*}\mathbb{R}^{2}$ are unknotted'', and
our goal can be rephrased as ``local planar Lagrangians in $T^{*}(\mathbb{R}^{2}-\left\{ 0,1\right\} )$
are unknotted''. The result for $\mathbb{R}^{2}$ in the smooth category
is proved by Eliashberg-Polterovich in \cite{key-10}, whose methods
provide great enlightenment for our proof, and the Hamiltonian version
is proved by the same authors in another article in \cite{key-7}.
In this article, we also establish the nearby Lagrangian conjecture
in the case of the cylinder $M=\mathbb{R}\times S^{1}$ by adapting
the proof of the Hamiltonian classification result for Lagrangian
tori in $T^{*}\mathbb{T}^{2}$ from \cite[Theorem B]{key-9}.
\begin{thm*}
\noindent (Theorem 17, the nearby Lagrangian conjecture for the cylinder)
Let $L\subset T^{*}\mathbb{T}^{2}$ be an exact Lagrangian torus which
coincides with the zero section above the subset $\theta_{2}\in\left[-\delta,\delta\right]$.
Then $L$ is Hamiltonian isotopic to the zero section by a Hamiltonian
isotopy which is supported in the same subset. 
\end{thm*}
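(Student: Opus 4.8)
The plan is to adapt the proof of \cite[Theorem B]{key-9}, which shows that an exact Lagrangian torus in $T^{*}\mathbb{T}^{2}$ is Hamiltonian isotopic to the zero section, and to carry it out relatively to the band $B=\{\theta_{2}\in[-\delta,\delta]\}$ over which $L$ already agrees with $0_{\mathbb{T}^{2}}$, so that the isotopy can be taken with support disjoint from $B$. Cutting $\mathbb{T}^{2}$ open along $B$ then turns this into a compactly supported Hamiltonian isotopy of the cylinder, which is the form needed for the application to $M=\mathbb{R}\times S^{1}$. I will use the identification $T^{*}\mathbb{T}^{2}\cong(\mathbb{C}^{*})^{2}$, with $\omega=dp_{1}\wedge d\theta_{1}+dp_{2}\wedge d\theta_{2}$ and the compatible complex structure under which $0_{\mathbb{T}^{2}}=\{|w_{1}|=|w_{2}|=1\}$ for $w_{j}=\exp(2\pi(-p_{j}+i\theta_{j}))$. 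The argument splits into a holomorphic-curve step, producing $L$ as the graph of an exact one-form that is standard over $B$, and a straightening step, producing the Hamiltonian isotopy with the prescribed support.

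For the first step I would fix an almost complex structure $J$ that is tamed by $\omega$, makes $L$ totally real, and is the integrable standard structure over $B$. Following \cite[Theorem B]{key-9}, the moduli space of the relevant pseudoholomorphic curves yields a (singular) foliation of $T^{*}\mathbb{T}^{2}$: automatic transversality in dimension four forces regularity, and Gromov/SFT compactness — using exactness of $L$ to obtain an a priori area bound via Stokes' theorem, and its standard form over $B$ to prevent escape to infinity in the noncompact fibers — guarantees that the leaves sweep out the whole manifold. Positivity of intersections then controls how each leaf meets $L$ and forces the base projection $\Pi\colon L\to\mathbb{T}^{2}$ to be a diffeomorphism. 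Hence $L$ is the graph of a closed one-form, which by exactness equals $df$ for some $f\in C^{\infty}(\mathbb{T}^{2})$; since $J$ and $L$ are standard over $B$, the construction is compatible with $df\equiv0$ there.

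For the second step, once $L=\mathrm{graph}(df)$ the family $L_{t}=\mathrm{graph}(t\,df)$ is a Lagrangian isotopy from $0_{\mathbb{T}^{2}}$ to $L$, generated by the Hamiltonian obtained by pulling $f$ back to $T^{*}\mathbb{T}^{2}$; running it backwards carries $L$ to the zero section. Over $B$ we have $df\equiv0$, so $f$ is locally constant and the interpolation is stationary there. To make the support genuinely disjoint from $B$ I would cut the generating Hamiltonian off in the fiber direction away from $L$; because $L$ is an honest graph, every $L_{t}$ remains a graph, so the cut-off creates no new intersections and does not perturb the region over $B$.

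The step I expect to be the main obstacle is reconciling the global holomorphic foliation with the relative support: one must both run the compactness argument of \cite[Theorem B]{key-9} in the open manifold — ruling out bubbling and the escape of leaves into the noncompact fibers — and arrange that the resulting normal form, and hence the straightening, is genuinely stationary over $B$. This is exactly where the hypotheses are used: exactness gives the area bound, while the standard form of $L$ over $B$ lets one take $J$ integrable and neck-stretch along $B$, pinning the leaves to their standard form over the band so that the foliation, the graph $df$, and the final Hamiltonian isotopy are all undisturbed there. Verifying that this control is uniform enough to yield an isotopy supported in the complement of $B$ is the crux of the adaptation.
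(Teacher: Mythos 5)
Your overall strategy---adapting \cite[Theorem B]{key-9} relatively to the band $B=\{\theta_{2}\in[-\delta,\delta]\}$, and using the fact that $L$ is standard over $B$ to pin down the holomorphic curves there so that the resulting isotopy has support disjoint from $B$---is exactly the paper's strategy, and your last paragraph correctly identifies where the hypotheses enter. But there is a genuine gap at the center of your argument: the claim that the holomorphic foliation plus positivity of intersections forces the cotangent projection $\Pi\colon L\to\mathbb{T}^{2}$ to be a diffeomorphism, so that $L=\mathrm{graph}(df)$. This is false, and cannot be proved by any argument: take the zero section and apply a Hamiltonian diffeomorphism supported in a small ball away from $T^{*}B$ which creates a fold tangency with the fibers; the image is an exact Lagrangian torus coinciding with the zero section over $B$, yet $\Pi$ restricted to it is not even a local diffeomorphism. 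Positivity of intersections cannot rule this out, because it applies to pairs of $J$-holomorphic objects, whereas $L$ is only totally real; the leaves meet $L$ along their boundary circles, and nothing about that controls $\Pi|_{L}$. A useful sanity check: if exact Lagrangian tori in $T^{*}\mathbb{T}^{2}$ were automatically graphs, then your second step (the linear interpolation $L_{t}=\mathrm{graph}(t\,df)$, which is correct but elementary) would prove the nearby Lagrangian conjecture for $\mathbb{T}^{2}$ with no holomorphic curve theory at all---the entire difficulty of the problem is that $L$ need not be graphical. Since your straightening step and your cut-off argument both rest on the graph property, they collapse with it.

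What the paper does instead is keep the output of \cite[Section 9]{key-9} in its original, non-graphical form: a properly embedded solid torus with core removed, $\dot{\mathcal{T}}\subset T^{*}\mathbb{T}^{2}$, whose boundary is $L$ and which is foliated by pseudoholomorphic cylinders (constructed after compactifying to $\mathbb{CP}^{1}\times\mathbb{CP}^{1}$); the Hamiltonian isotopy is produced from this solid torus, not from a primitive $f$. The new ingredient that yields the support statement is a \emph{two-sided} family of standard cylinders: because $L$ equals the zero section over $B$, one has not only the cylinders $C\left(e^{is}\right)$ at the level $p_{2}=p_{2}^{0}$ with boundary on $L$, but also the cylinders $C\left(p_{2},e^{is}\right)$ for both $p_{2}<p_{2}^{0}$ and $p_{2}>p_{2}^{0}$ (in \cite{key-9} only one side was available, since the hypothesis there was one-sided). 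The positivity-of-intersection argument of \cite[Lemma 9.8(2)]{key-9} then shows that $\dot{\mathcal{T}}$, and hence the isotopy it produces, is disjoint from all of these cylinders, i.e.\ it meets the region over $B$ only in the standard leaves $C\left(e^{is}\right)$; this is what makes the isotopy restrict to a compactly supported isotopy of the cotangent bundle of the cylinder. If you replace your ``$L$ is a graph'' step by this solid-torus statement and run your relative-support idea through it, you recover the paper's proof; as written, the proposal proves a statement about graphical Lagrangians only.
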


\subsection{Organization of this paper}

\noindent Firstly in section 2, we will present and prove some preliminaries
from Gromov's pseudo-holomorphic curve theory. Then in section 3,
we will go through the proof of local smooth unknottedness of planar
Lagrangians. In section 4, we will explain how Theorem B, which seems
to be the torus version of nearby Lagrangian conjecture in \cite{key-9}
gives an isotopy for the cylinder, and finally in section 5 prove
the local unknottedness of planar Lagrangians with two punctures in
the smooth category. 

\section{Preliminaries from Pseudo-Holomorphic Curve Theory}

\subsection{Moser's trick}

\noindent In \cite{key-1}, J. Moser invented the following method
which turns out to be useful in many cases: Consider two k-forms $\alpha_{1}$
and $\alpha_{0}$ on a smooth manifold $M$ and one wants to find
a diffeomorphism 
\[
\varphi:M\rightarrow M,\varphi^{*}\alpha_{1}=\alpha_{0}.
\]
Moser's idea is to find a family of diffeomorphisms $\varphi_{t},0\leq t\leq1$
for a family of forms $\alpha_{t}$ connecting $\alpha_{1}$ and $\alpha_{0}$
s.t.
\[
\varphi_{t}^{*}\alpha_{t}=\alpha_{0}.
\]
This is in fact
\[
0=\frac{d}{dt}\phi_{t}^{*}\alpha_{t}=\phi_{t}^{*}\left(\alpha_{t}^{\prime}+\mathcal{L}_{X_{t}}\alpha_{t}\right)
\]
Use Cartan's formula, 
\[
\alpha_{t}^{\prime}+d\iota_{X_{t}}\alpha_{t}+\iota_{X_{t}}d\alpha_{t}=0
\]
In our case $\alpha_{t}=\omega_{t}$ is a symplectic form, so it's
closed. The equation becomes
\[
\omega_{t}^{\prime}+d\iota_{X_{t}}\omega_{t}=0.
\]

\noindent As an application, we have a standard result
\begin{thm}
\noindent Let $\omega_{0}$ and $\omega_{1}$ be two symplectic forms
on a compact manifold $M$ that belong to the same de Rham cohomology
class, and $\omega_{t}:=\omega_{0}+td\beta=\left(1-t\right)\omega_{0}+t\omega_{1}$
is symplectic for each $t\in\left[0,1\right]$ Then there is a symplectomorphism
$\phi:\left(M,\omega_{0}\right)\rightarrow\left(M,\omega_{1}\right)$. 
\end{thm}

\begin{proof}
\noindent There exists a 1-form $\beta$ s.t. $\omega_{1}=\omega_{0}+d\beta$.
Let $\omega_{t}:=\omega_{0}+td\beta=\left(1-t\right)\omega_{0}+t\omega_{1}$.
Then the equation
\[
\omega_{t}^{\prime}+d\iota_{X_{t}}\omega_{t}=0
\]
becomes
\[
L.H.S.=d(\beta+\iota_{X_{t}}\omega_{t})=0
\]
which is solvable. 
\end{proof}

\subsection{Gromov's pseudo-holomorphic curve theory}

\subsubsection{Almost complex structure}
\begin{defn}
\noindent An almost complex structure $J\in End\left(TM\right)$ on
a symplectic manifold $\left(M,\omega\right)$ is a smooth linear
structure $J_{m}$ on each tangent space $T_{m}M$ which satisfies
$J_{m}^{2}=-Id$. $J$ is $tamed$ by the symplectic form $\omega$
if
\[
\omega\left(v,Jv\right)>0,\forall v\neq0.
\]
$J$ is compatible with $\omega$ if $\omega\left(v,Jv\right)$ is
a Riemannian metric. A manifold that admits an almost complex structure
is called an almost complex manifold. 
\end{defn}

\begin{rem*}
\noindent Every symplectic manifold admits compatible almost complex
structures. The space of tamed and compatible almost complex structures
are denoted by 
\[
J^{tame}\left(M,\omega\right)
\]
 
\[
J^{comp}\left(M,\omega\right)
\]
. 
\end{rem*}
\begin{lem}
\noindent (Gromov 1985, \cite{key-11}) The spaces $J^{tame}\left(M,\omega\right)$
and $J^{comp}\left(M,\omega\right)$ are both contractible. 
\end{lem}

\begin{proof}
\noindent Sketch of proof: The key is to prove that for each tangent
space the lemma holds. One can identify the latter with a convex and
open subset of the space of metrics on a vector space, hence contractible.
A similar but more complicated identification proves the former.
\end{proof}
\noindent A compatible almost complex structure on a manifold $M$
is a section over $M$ of the fibre bundle whose fibres on each point
are compatible almost complex actions of that point. Sometimes it's
hard to construct a compatible structure over the whole symplectic
manifold, and the above Lemma gives us a way to firstly construct
a section on a suitable submanifold of $M$, and then extend this
section over the entire manifold $M$. This extension is ensured by
the contractibility of fibres. We will apply this to $M=\mathbb{CP}^{2}$
in the next section. 

\subsubsection{Positivity of intersection}
\begin{defn}
\noindent Let $\left(\Sigma,j\right)$ be a Riemann surface. A map
from this Riemann surface to an almost complex manifold $u:\left(\Sigma,j\right)\rightarrow\left(M,J\right)$
is said to be $J-holomorphic$ (also called pseudo-holomorphic) if
it satisfies the fully non-linear first order PDE
\[
\bar{\partial}_{J}u=\frac{1}{2}\left(du+J\circ du\circ j\right)=0
\]
of Cauchy-Riemann type. When $\Sigma=\mathbb{CP}^{1}$, $u$ is called
a J-holomorphic sphere.
\end{defn}

\noindent From the classical intersection theory, we have
\begin{prop}
\noindent (McDuff 1994, \cite{key-16}) Consider a connected holomorphic
curve $u:\Sigma\rightarrow M$ and a holomorphic hypersurface $D\subset M$,
i.e. the complex dimension of $D$ = the complex dimension of $M$
minus one, such that u is not contained inside D. Then:

\noindent $\bullet$ $u$ and $D$ intersect in a discrete subset;

\noindent $\bullet$ each geometric intersection point gives a positive
contribution to the algebraic intersection number $\left[u\right]\cdot\left[D\right]\geq0$;

\noindent $\bullet$ if an intersection point moreover is not a transverse
intersection (e.g. a tangency or an intersection of $D$ and a singular
point of $u$), then that geometric point contributes at least +2.
\end{prop}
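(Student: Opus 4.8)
The plan is to localize the entire statement near a single intersection point and then to exploit that a $J$-holomorphic curve satisfies a perturbed Cauchy--Riemann equation of a kind to which the similarity principle applies. Since all three assertions concern the behavior of $u$ near the points of $u^{-1}(D)$, I would fix $z_0 \in \Sigma$ with $p := u(z_0) \in D$ and choose local holomorphic coordinates $(w_1,\dots,w_n)$ on $M$ near $p$ in which $D = \{w_1 = 0\}$ and $J$ coincides with the standard structure $i$ along $D$; let $\rho := w_1$ be the resulting defining function. Discreteness (the first bullet) then follows from unique continuation: were $u^{-1}(D)$ to accumulate at $z_0$, then $u$ would map a whole neighborhood of $z_0$ into $D$, and since $u$ solves the elliptic equation $\bar\partial_J u = 0$, unique continuation would force the connected curve $u$ into $D$, contrary to hypothesis.

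The heart of the argument concerns the composite $f := \rho \circ u$, whose zeros are exactly the intersection points. Because $D$ is $J$-holomorphic, $J - i$ vanishes along $D = \{w_1 = 0\}$ and is therefore $O(|w_1|) = O(|f|)$; feeding this into the coordinate form $\partial_s u + J(u)\,\partial_t u = 0$ of the $J$-holomorphic equation and reading off the $w_1$-component shows that $f$ satisfies an equation of the shape $\bar\partial f = \alpha(z)\,f + \beta(z)\,\bar f$ with bounded, continuous coefficients $\alpha,\beta$. This is precisely the setting of the Carleman similarity principle, which I would invoke to factor $f(z) = \Phi(z)\,g(z)$ near $z_0$, where $g$ is a genuine holomorphic function and $\Phi$ is continuous and nowhere vanishing.

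With this factorization the remaining two bullets become classical. The local intersection number of $u$ with $D$ at $z_0$ is the winding number of $f$ about small circles centered at $z_0$; since $\Phi$ never vanishes, that winding number equals the order of vanishing $k \geq 1$ of the holomorphic function $g$, a strictly positive integer, which gives the positive contribution of the second bullet. For the third bullet I would observe that $k = 1$ exactly when $g'(z_0) \neq 0$, which---since $f(z_0)=0$ forces $df(z_0)$ to be complex linear---is equivalent to $du(z_0)$ mapping $T_{z_0}\Sigma$ to a complex line distinct from $T_p D$, i.e.\ to a transverse intersection. Consequently any tangency, or any intersection at a critical point of $u$ (where $du(z_0) = 0$ and hence $g'(z_0) = 0$), forces $k \geq 2$.

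The main obstacle is the analytic input packaged as the similarity principle: establishing the factorization $f = \Phi g$ rests on the regularity theory for the perturbed $\bar\partial$-operator and, in the treatment of singular points of $u$, on the asymptotic analysis of $J$-holomorphic maps near their critical points. Since reproving this would take us far afield, for the present preliminaries I would take it as the cited result of \cite{key-16} and build only the elementary winding-number bookkeeping on top of it.
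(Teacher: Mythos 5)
Note first that the paper does not prove this proposition at all: it is quoted as a black box from McDuff \cite{key-16}, so your sketch can only be compared with the standard literature argument. Your outline is indeed that argument --- localize at an intersection point, show the normal component $f=\rho\circ u$ satisfies a perturbed Cauchy--Riemann equation, factor $f=\Phi g$ by the similarity principle, and read off local intersection numbers as winding numbers --- and the bookkeeping in your last two paragraphs (local intersection number equals the vanishing order $k$ of $g$; $k=1$ iff the intersection is transverse, $k\geq 2$ at tangencies and critical points) is correct, granted the factorization.

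The genuine gap is the sentence ``Because $D$ is $J$-holomorphic, $J-i$ vanishes along $D$'', and, behind it, your coordinate normalization. $J$-holomorphicity of $D$ means only that $J$ preserves $TD$; it does not mean that $J$ restricts to the standard structure along $D$. Coordinates in which $J=i$ at \emph{every} point of $D$ near $p$ exist only if $J$ restricted to $TM|_{D}$ can be made standard, which in particular requires the almost complex structure induced on $D$ to be integrable near $p$. For $\dim_{\mathbb{C}}D\geq2$ this fails for generic $J$, so for the proposition as stated (arbitrary $M$) your reduction breaks down. What $J$-invariance of $TD$ actually yields is the weaker estimate
\[
\left|\left[(J-i)(w)\,v\right]_{1}\right|\leq C\left(|w_{1}|\,|v|+|J-i|\,|dw_{1}(v)|\right),
\]
which, after inserting $v=\partial_{t}u$ and using $J(p)=i$ (achievable at the single point $p$), gives only $|\bar{\partial}f|\leq C|f|+\epsilon|df|$ near $z_{0}$. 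This is no longer the Carleman form $\bar{\partial}f=\alpha f+\beta\bar{f}$; one needs instead a Hartman--Wintner/Aronszajn-type unique continuation result (a ``generalized'' similarity principle allowing a small first-order term) to obtain the leading-order expansion $f(z)=a(z-z_{0})^{k}+o(|z-z_{0}|^{k})$ on which your winding-number argument runs. Alternatively, in the case the paper actually uses, $\dim_{\mathbb{C}}M=2$, your normalization is legitimate but needs justification: there $D$ is a real surface, every almost complex structure on a surface is integrable, so a $J|_{D}$-holomorphic coordinate on $D$ together with a complex trivialization of a $J$-invariant normal line bundle gives $J=i$ along $D$. A smaller point of the same kind: your first bullet is argued before the factorization is available, but ``accumulation of $u^{-1}(D)$ implies $u$ maps a neighborhood into $D$'' is itself a consequence of the factorization (zeros of $\Phi g$ accumulate only if $g\equiv0$), not of bare elliptic unique continuation for $u$; the logic should be reordered accordingly.
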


\noindent This is of special importance to dim-4 manifolds, because
under this occasion the hypersurface $D$ is also two-dimensional.
We have
\begin{thm}
\noindent (McDuff 1994, \cite{key-16}) Positivity of intersection
in dim 4: two closed distinct J -holomorphic curves $u$ and $u^{\prime}$
in an almost complex 4-manifold $(M,J)$ have only a finite number
of intersection points. Each such point $x$ contributes a number
$k_{x}\geq1$ to the algebraic intersection number $\left[u\right]\cdot\left[u^{\prime}\right]$;.
Moreover, $k_{x}=1$ iff the curves $u$ and $u^{\prime}$ intersect
transversally at $x$.
\end{thm}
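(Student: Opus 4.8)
The plan is to reduce everything to a local question at each intersection point and then to the classical intersection theory of honest holomorphic curves. First I would observe that the statement is local: since $u$ and $u'$ are distinct (share no common component) and their domains are closed, the set of intersection points is discrete, hence finite by compactness, provided one knows that intersections cannot accumulate. This isolation follows from the Carleman similarity principle: any $J$-holomorphic map satisfies a linear Cauchy--Riemann type equation with a zeroth-order term, so after a local similarity transformation its germ agrees with that of an honest holomorphic map. Comparing the two curves through such a similarity shows that if they met along a set with an accumulation point they would share a component, contradicting distinctness.

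Next, the heart of the argument is the local model at a fixed intersection point $x$. Here I would invoke the Micallef--White normalization theorem, which applies the similarity principle to the leading-order asymptotics of a $J$-holomorphic curve near a possibly branched point: it produces a local homeomorphism $\Phi$ of a neighborhood of $x$ in $M$, a $C^1$-diffeomorphism away from $x$ and orientation-preserving, such that $\Phi\circ u$ and $\Phi\circ u'$ are simultaneously germs of \emph{honest} holomorphic curves in $\left(\mathbb{C}^2,0\right)$. This step converts the pseudo-holomorphic problem into a purely complex-analytic one.

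Once both germs are genuinely holomorphic, I would define $k_x$ as the standard local intersection multiplicity, equivalently the dimension of the local ring $\mathcal{O}_0/\left(f,g\right)$ for local defining functions $f,g$, or the linking number on a small sphere $S^3_\epsilon$ of the two disjoint links cut out by the curves. Positivity, $k_x\geq 1$, is then classical: holomorphic curves carry the canonical complex orientation, so all of their mutual intersections, and hence all such linking numbers, are strictly positive. Because $\Phi$ is orientation-preserving and a diffeomorphism away from $x$, it carries the links of the original curves to the links of the normalized ones, leaving the linking number, and therefore $k_x$, unchanged. For the final assertion I would use that for holomorphic germs $k_x=1$ holds exactly when both germs are smooth with distinct tangent lines, i.e.\ when they meet transversally; since $\Phi$ is a $C^1$-diffeomorphism off $x$, it matches transversality of the original pair with transversality of the normalized pair.

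The main obstacle is the Micallef--White normalization itself. Establishing it requires delicate control of the precise leading-order behavior of the $J$-holomorphic maps near $x$---extracting the dominant holomorphic term through the similarity principle and then building the positivity-preserving local identification---and this is where the genuine analysis lies; the intersection-theoretic bookkeeping afterward is standard. A more self-contained route avoiding the full strength of Micallef--White would instead control only the leading terms of the two maps, show that the resulting links are isotopic to those of their holomorphic models, and compute the linking number directly, but it still rests on the same asymptotic analysis.
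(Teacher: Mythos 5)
Your outline is correct, but there is nothing in the paper to compare it against: the paper states this theorem purely as a background result with a citation to McDuff's 1994 article and gives no proof at all (it only \emph{uses} the statement later, e.g.\ in the intersection arguments of Theorem 14 and Theorem 17). What you have written is essentially a faithful reconstruction of the proof in the cited literature: isolation and hence finiteness of intersections via the Carleman similarity principle and unique continuation, the Micallef--White normalization to replace both germs by honest holomorphic ones, and then classical local intersection theory (multiplicity as $\dim\mathcal{O}_0/\left(f,g\right)$ or as a linking number, positivity from the complex orientation, and the multiplicity-one characterization of transversality). Two remarks on the details you black-box. First, your appeal to a single homeomorphism $\Phi$ that holomorphizes \emph{both} germs simultaneously is legitimate, but it is precisely the two-curve form of Micallef--White (as stated, for instance, in McDuff--Salamon's appendix on local behaviour); the one-curve version alone would not suffice, so you are right to identify this as the real analytic content. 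Second, the direction ``transverse $\Rightarrow k_{x}=1$'' needs none of this machinery: at a transverse intersection point the two tangent planes are distinct $J$-complex lines, each carrying its canonical orientation, and two distinct complex lines in a complex $2$-dimensional space always intersect positively; it is the converse --- that tangential or singular intersections contribute at least $2$ --- which genuinely requires the normalization or the asymptotic expansion you describe.
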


\subsubsection{Existence of a pseudo-holomorphic line passing through two points}

\noindent Recall that in algebraic geomrtry, there exists precisely
one algebraic curve of degree one (i.e. homologus to $L\in H_{2}\left(\mathbb{CP}^{n}\right)=\mathbb{Z}\cdot L$)
that passes through two given points $P_{1}\neq P_{2}\in\mathbb{CP}^{n}$:
the complex line
\[
\mathbb{CP}^{1}\rightarrow\mathbb{CP}^{n},\left[x_{1}:x_{2}\right]\mapsto x_{1}\cdot P_{1}+x_{2}\cdot P_{2}
\]
unique up to reparameterization. 

\noindent For pseudo-holomorphic spheres, we have
\begin{thm}
\noindent (Gromov 1985, \cite{key-11}) There exists a unique up to
reparameterization holomorphic curve of degree one (i.e. homologus
to $L\in H_{2}\left(\mathbb{CP}^{n}\right)=\mathbb{Z}\cdot L$) that
passes through two given points $P_{1}\neq P_{2}\in\mathbb{CP}^{n}$:
the complex line
\[
\mathbb{CP}^{1}\rightarrow\mathbb{CP}^{n},\left[x_{1}:x_{2}\right]\mapsto x_{1}\cdot P_{1}+x_{2}\cdot P_{2}.
\]
\end{thm}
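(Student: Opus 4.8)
The plan is to deduce the statement from three standard pillars of the theory: a dimension count for the moduli space of degree-one spheres, Gromov compactness together with the absence of bubbling in the primitive class $L$, and the positivity of intersection established in Theorem 4. I would carry out the argument for a general almost complex structure $J$ tamed by the Fubini--Study form; specializing to the integrable standard structure recovers the explicit complex line $\left[x_{1}:x_{2}\right]\mapsto x_{1}P_{1}+x_{2}P_{2}$, which serves as the base point for the deformation argument producing existence. For the application in the next section only the case $M=\mathbb{CP}^{2}$ is needed, and this is where the argument is cleanest.

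First I would set up the moduli space. Since $c_{1}\left(L\right)=n+1$ on $\mathbb{CP}^{n}$, the index (Riemann--Roch) formula gives the expected real dimension of the space of unparameterized genus-zero $J$-curves in class $L$ with two marked points as $2c_{1}\left(L\right)+\left(2n-6\right)+2\cdot2=4n$, exactly matching $\dim_{\mathbb{R}}\left(\mathbb{CP}^{n}\times\mathbb{CP}^{n}\right)=4n$. Thus the two-point evaluation map goes between equidimensional spaces, and imposing $\mathrm{ev}=\left(P_{1},P_{2}\right)$ cuts the moduli space down to dimension zero. Next comes compactness: by Gromov compactness a sequence of degree-one curves converges to a stable map whose components carry classes summing to $L$; because $L$ is primitive in $H_{2}\left(\mathbb{CP}^{n}\right)=\mathbb{Z}\cdot L$ and each nonconstant component has positive symplectic area by taming, a nodal limit would require at least two nonconstant components of positive degrees summing to one, which is impossible. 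Hence the constrained moduli space is compact and consists of honest spheres, and no curve is multiply covered, since a cover would have degree at least two. In particular every such curve is simple, hence somewhere injective.

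For regularity and existence I would work in the four-dimensional case $\mathbb{CP}^{2}$ and invoke automatic transversality. By the adjunction formula a degree-one $J$-sphere has self-intersection $\left[L\right]\cdot\left[L\right]=1$ and $c_{1}\left(L\right)=3$, forcing it to be an embedded sphere; an embedded $J$-sphere with $c_{1}\geq1$ in a symplectic four-manifold is automatically Fredholm regular, so the moduli space is a smooth manifold of the expected dimension for every tamed $J$, with no need to perturb. Existence then follows by deformation invariance: using the connectedness of $J^{tame}$ from Lemma 3, I connect $J$ to the standard integrable $J_{0}$ by a path of tamed structures; compactness and regularity prevent curves from escaping or degenerating along the path, so the constrained count is invariant, and for $J_{0}$ it equals the classical count $1$. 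Hence at least one curve through $P_{1},P_{2}$ exists for $J$.

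Finally, uniqueness follows from positivity of intersection (Theorem 4). If two distinct degree-one $J$-spheres both pass through $P_{1}$ and $P_{2}$, then Theorem 4 expresses their algebraic intersection number as a sum of strictly positive local contributions over finitely many geometric points, and the two shared points $P_{1}\neq P_{2}$ already force this number to be at least $2$; but $\left[L\right]\cdot\left[L\right]=1$, a contradiction. Therefore the curve is unique up to reparameterization. The \emph{main obstacle} is precisely the regularity-for-all-$J$ issue: upgrading genericity to every tamed $J$ is exactly what automatic transversality in dimension four delivers, and it is also what makes the count deformation-invariant along an arbitrary path in $J^{tame}$. For general $\mathbb{CP}^{n}$ with $n\geq3$ the positivity argument for uniqueness no longer applies, since two lines need not meet, and uniqueness must instead be extracted from the moduli-theoretic count; I would either restrict to $\mathbb{CP}^{2}$, as the application requires, or supply the higher-dimensional transversality separately.
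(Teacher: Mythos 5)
Your proposal is correct in the case it ultimately covers ($n=2$, arbitrary tamed $J$), but it takes a genuinely different route from the paper. The paper's proof works only with the integrable standard structure $J_{0}$ (in its proof the curve maps into $\left(\mathbb{CP}^{n},J_{0}\right)$) and is essentially two lines long: if a degree-one curve were not a line, one could choose a linear hyperplane $H\cong\mathbb{CP}^{n-1}$ tangent to the curve but not containing it; by positivity of intersection the tangency contributes at least $2$ to $H\cdot\left[u\right]$, contradicting $H\cdot\left[u\right]=1$. Existence is then just the explicit classical line, and the argument works for every $n$. You instead assemble the full Gromov package: the index count, Gromov compactness in the primitive class, adjunction plus automatic transversality in dimension four, deformation invariance of the count back to $J_{0}$, and positivity of intersection for uniqueness. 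What your route buys is precisely the statement for an arbitrary tamed $J$ on $\mathbb{CP}^{2}$ --- which is what Remark 15 declares Theorem 8 to mean, and what the proofs of Theorem 14 and of the theorem in Section 5 actually use, since they interpolate through families $J\left(t\right)$ of non-integrable structures for which the paper's hyperplane trick is unavailable (there are no $J\left(t\right)$-holomorphic hyperplanes in general). What it costs is the case $n\geq3$ of the theorem as literally stated: there your uniqueness argument genuinely fails (two degree-one spheres in $\mathbb{CP}^{n}$ need not intersect at all), and automatic transversality is likewise a four-dimensional tool, as you yourself acknowledge. Two smaller points to tighten: your cross-references are off (contractibility is Lemma 4, positivity of intersection in dimension four is Theorem 7); and regularity of the unconstrained moduli space does not by itself make the count through the two fixed points well defined and invariant --- either run the properness-and-degree argument for the evaluation map $\mathrm{ev}:\mathcal{M}_{0,2}\left(L;J\right)\rightarrow\mathbb{CP}^{2}\times\mathbb{CP}^{2}$, or invoke the constrained version of automatic transversality, which applies here because $c_{1}\left(L\right)=3\geq k+1$ for $k=2$ point constraints.
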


\begin{proof}
\noindent If there exists a curve 
\[
u:\left(\Sigma,j\right)\rightarrow\left(\mathbb{CP}^{n},J_{0}\right)
\]
 in class $\left[u\right]=L$ is not of the above form, then we can
find a linear hyperplane (denoted by $H$) $\mathbb{CP}^{n-1}\subset\mathbb{CP}^{n}$
which is tangent to the curve at some point but not contain it. Positivity
of intersection of the curve and the hyperplane implies that $H\cdot L\geq2$,
because a tangency contributes at least 2 to the intersection number.
However, $H\cdot\left[u\right]=1$, contradiction. 
\end{proof}
\noindent This will be of great importance when n=2. We will discuss
it later in section 3.

\section{Local Unknottedness of Planar Lagrangians without Boundary}

\noindent In this section we will elaborate Eliashberg-Polterovich's
proof of the smooth version of the nearby Lagrangian conjecture of
$\mathbb{R}^{2}$, which will provide great insight for our case about
pair of pants. 
\begin{thm}
\noindent (Eliashberg-Polterovich 1993, \cite{key-10}) Any flat at
infinity Lagrangian embedding of $\mathbb{R}^{2}$ into the standard
symplectic $\mathbb{R}^{4}$ is isotopic to the flat embedding via
an ambient compactly supported smooth isotopy of $\mathbb{R}^{4}$.
\end{thm}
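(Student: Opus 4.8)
The plan is to realize $L$ as a graphical object for a singular foliation of a compactification of $\mathbb{R}^{4}$ by pseudo-holomorphic lines, and then to straighten it onto the flat plane fiber by fiber. First I would compactify the standard symplectic $\left(\mathbb{R}^{4},\omega_{std}\right)=\left(\mathbb{C}^{2},\omega_{0}\right)$ to $\left(\mathbb{CP}^{2},\omega\right)$ by adjoining a line at infinity $\ell_{\infty}$, so that the flat plane $\Pi=\mathbb{R}^{2}\subset\mathbb{C}^{2}$ and the given embedding $L$, which agrees with $\Pi$ off a compact set, both close up inside $\mathbb{CP}^{2}$ and coincide on a neighborhood of $\ell_{\infty}$. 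Producing a compactly supported ambient smooth isotopy of $\mathbb{R}^{4}$ carrying $L$ to $\Pi$ is then equivalent to producing an ambient smooth isotopy of $\mathbb{CP}^{2}$ that is the identity near $\ell_{\infty}$, and this is the statement I would prove inside the compact model.

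Next I would set up the pseudo-holomorphic foliation. Using Lemma 4 (contractibility of the space of compatible almost complex structures) I would choose a compatible $J$ on $\mathbb{CP}^{2}$ equal to the standard integrable $J_{0}$ on a neighborhood of $\ell_{\infty}$ and on the region where $L=\Pi$, obtained by extending the prescribed data off the compact core. Fixing the point $p\in\ell_{\infty}$ corresponding to the direction of the standard lines $\left\{ z_{2}=c\right\} $, Theorem 8 (the unique degree-one line through two points, applied first to $J_{0}$ and propagated to $J$) together with Theorem 7, which forbids two distinct lines in the class $[\ell]$ with $[\ell]^{2}=1$ from meeting anywhere other than their common point, yields a one-parameter family of $J$-holomorphic lines through $p$ foliating $\mathbb{CP}^{2}\setminus\left\{ p\right\} $. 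Since each such line meets $\ell_{\infty}$ only at $p$, deleting $p$ gives a foliation of $\mathbb{C}^{2}$ by $J$-complex lines that coincide with the affine lines $\left\{ z_{2}=c\right\} $ near infinity; this defines a projection $\pi:\mathbb{C}^{2}\rightarrow\mathbb{C}$ onto the base.

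Then I would analyze how $L$ sits relative to the foliation. Each leaf is a closed $J$-holomorphic curve and $L$ is totally real for the compatible $J$, so by the positivity of intersection between a $J$-holomorphic curve and a totally real submanifold (the totally real analogue of Proposition 6) the intersections of a leaf with $L$ are isolated and count positively; comparing with the flat model $\Pi$, whose intersections with $\left\{ z_{2}=c\right\} $ are governed by the fiberwise conjugation, pins down the algebraic intersection data, while the clause charging at least $+2$ to any non-transverse point forces transversality. The outcome I am aiming for is that the pattern of intersections of $L$ with the leaves matches that of $\Pi$, so that $L$ is graphical over the foliation in the same combinatorial way as the flat plane.

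Finally I would straighten. Both $L$ and $\Pi$ are now graphical for foliations that agree near $\ell_{\infty}$; interpolating the two foliations through the contractible space of admissible $J$'s and using that the fiberwise graphs form a convex, hence contractible, family, I would build a fiber-preserving smooth isotopy from $L$ to $\Pi$, supported in a compact subset of $\mathbb{C}^{2}$ because everything is standard near infinity, with Moser's theorem (Theorem 2) available to normalize symplectic data on the leaves should it be needed, though for the smooth conclusion the graph-straightening suffices. The hard part will be the third step, namely producing the \emph{global} foliation and proving that $L$ is graphical: one must rule out bubbling and degeneration of the $J$-holomorphic lines via Gromov compactness, guarantee existence and uniqueness of a leaf through each point, and combine positivity of intersection with the total reality of $L$ to control the intersection count on every leaf simultaneously. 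This is where the flatness of $L$ at infinity and the careful choice of $J$ are indispensable.
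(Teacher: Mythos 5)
There is a genuine gap, and it sits exactly at the step you yourself flag as the hard part. Your argument keeps $L$ Lagrangian (totally real) and tries to control it through its intersections with the leaves of a $J$-holomorphic foliation, invoking ``positivity of intersection between a $J$-holomorphic curve and a totally real submanifold (the totally real analogue of Proposition 6).'' No such result exists: positivity of intersection (Proposition 6, Theorem 7) is strictly a phenomenon for pairs of pseudo-holomorphic objects, and intersections of a $J$-holomorphic curve with a Lagrangian or totally real surface carry no sign, no isolation, and no multiplicity constraints. Your own intended comparison model already refutes the picture: the flat plane $\Pi=\mathbb{R}^{2}\subset\mathbb{C}^{2}$ meets the standard leaf $\left\{ z_{2}=c\right\} $ in the entire real line $\mathbb{R}\times\left\{ c\right\} $ when $c\in\mathbb{R}$, and misses the leaf entirely when $c\notin\mathbb{R}$. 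So the flat Lagrangian is not ``graphical over the foliation'' with isolated transverse positive intersections, and neither is any Lagrangian; the straightening in your final step therefore has nothing to act on. A related symptom appears already in your compactification: the closure of the Lagrangian plane $\Pi$ in $\mathbb{CP}^{2}$ is an $\mathbb{RP}^{2}$ meeting $\ell_{\infty}$ in a circle, not a sphere meeting it in a point, so $L$ and $\Pi$ do not close up to degree-one spheres as long as they stay Lagrangian.

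The missing idea --- the first and decisive move in the paper's proof (Lemma 11) --- is to stop treating $L$ as Lagrangian: one perturbs the symplectic form to $\omega_{\varepsilon}=\omega+\varepsilon dq_{1}\wedge dq_{2}$ and applies Moser's trick, after which both $L$ and the flat plane $l$ are \emph{symplectic} surfaces for $\omega_{\varepsilon}$. Only then does one interpolate to the Fubini--Study form and compactify (Lemmas 12--13), so that $l$ and $L$ become degree-one symplectic spheres $\gamma_{0}$ and $\Sigma$ in $\mathbb{CP}^{2}$, each meeting $\Gamma_{\infty}$ at a single point $P$. A degree-one symplectic sphere can be made $J$-holomorphic for a compatible $J$, i.e.\ it \emph{is} a line of a pencil; the isotopy is then produced not by straightening graphs over a fixed foliation but by connecting $J_{0}$ to $J_{1}$ through compatible structures keeping $\Gamma_{\infty}$ holomorphic, and taking the unique $J\left(t\right)$-line through $P$ and a moving point $Q\left(t\right)$ (Theorem 8), with positivity of intersection used only between holomorphic objects --- namely to show $C\left(t\right)$ meets $\Gamma_{\infty}$ once, transversally. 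Your foliation machinery and use of contractibility of $J^{comp}$ are the right tools, but they can only engage after the symplectic-form perturbation converts $L$ from a totally real surface into a holomorphic line; without that step the approach fails at the outset.
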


\begin{rem}
\noindent In fact, this isotopy can be made Hamiltonian by constructing
a family of suitable hypersurfaces. See \cite{key-7}. 
\end{rem}

\noindent Consider the standard linear space $\mathbb{R}^{4}$ with
coordinates $\left(p_{1},q_{1},p_{2},q_{2}\right)$ such that the
symplectic form $\omega=dp_{1}\wedge dq_{1}+dp_{2}\wedge dq_{2}$.
Let $l=\left\{ p_{1}=p_{2}=0\right\} $ be a Lagrangian plane and
let $L\subset\mathbb{R}^{4}$ be a Lagrangian submanifold which is
diffeomorphic to $\mathbb{R}^{2}$ and which coincides with $l$ outside
a compact subset. We have to prove that $L$ is isotopic to $l$ via
a compactly supported isotopy of $\mathbb{R}^{4}$. To do this, we
will modify $\omega$ to make $l$ and $L$ symplectic at the same
time, so that we can use compactification to make them projective
lines and construct a smooth isotopy. 
\begin{lem}
\noindent Consider the family of symplectic forms $\omega_{\varepsilon}=\omega+\varepsilon dq_{1}\land dq_{2}$,
$\varepsilon>0$ small. There exists a compactly supported isotopy
of $\mathbb{R}^{4}$ that coincides with a linear symplectic plane
outside of a compact subset which takes $L$ to an $\omega_{\varepsilon}-$symplectic
surface. 
\end{lem}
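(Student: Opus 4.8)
The plan is to read off, from the Lagrangian condition, exactly where $L$ fails to be $\omega_{\varepsilon}$-symplectic, and then to remove that failure by a small ambient tilt in the fibre directions. Since $L$ is $\omega$-Lagrangian, $\omega|_{L}=0$, so on $L$ we have $\omega_{\varepsilon}|_{L}=\varepsilon\,(dq_{1}\wedge dq_{2})|_{L}$. Hence $L$ is $\omega_{\varepsilon}$-symplectic at a point precisely when the projection $\pi\colon(p,q)\mapsto q$ restricts to a local diffeomorphism of $L$ onto the base there. Outside a compact set $L$ coincides with $l$ and $\pi|_{L}=\mathrm{id}$, so $\pi|_{L}$ has degree $1$; but in the compact region $\pi|_{L}$ may develop a caustic, along which $(dq_{1}\wedge dq_{2})|_{L}$ vanishes and across which $\omega_{\varepsilon}|_{L}$ becomes negative on the orientation-reversing sheets it encloses. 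The first step is therefore to record that the whole obstruction is concentrated on the closed set $K\subset L$ where $\pi|_{L}$ is not an orientation-preserving immersion, and that $K$ lies inside the compact region where $L\neq l$.

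The main strategy is to exploit that $\varepsilon$ is a free small parameter and to trade the degenerate $\varepsilon$-term against a genuine $O(1)$ contribution from $\omega$. I would first make a $C^{\infty}$-small, compactly supported Hamiltonian perturbation so that $\pi|_{L}$ becomes a generic Lagrangian projection (folds along a curve with isolated cusps); this keeps $L$ Lagrangian and moves $K$ only slightly. Then I would construct a compactly supported ambient isotopy $\psi_{t}$ of $\mathbb{R}^{4}$, supported in a small neighbourhood of $K$ and $C^{0}$-small, tilting $L$ off the Lagrangian condition so that the image $S=\psi_{1}(L)$ satisfies $\omega|_{S}>0$ on a neighbourhood of $K$ while staying equal to $L$ (hence to $l$) outside the compact region. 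Finally I would fix $\varepsilon$ small enough, after the isotopy, that $\omega_{\varepsilon}|_{S}=\omega|_{S}+\varepsilon\,(dq_{1}\wedge dq_{2})|_{S}>0$ everywhere: near $K$ the positive $\omega|_{S}$ dominates the $O(\varepsilon)$ term of either sign, while away from $K$ the projection is a positive immersion and the $\varepsilon$-term is already positive, the tilt being switched off there.

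The infinitesimal input for the tilt is elementary: the $\omega$-Lagrangian Grassmannian is a codimension-one subset sitting in the common boundary of the open set of $2$-planes on which $\omega$ restricts to a positive area form, and an arbitrarily small rotation mixing the two symplectic factors carries any Lagrangian plane into that open set (for instance, rotating a $\partial_{q_{i}}$ slightly towards a $\partial_{p_{j}}$ with $i\neq j$ makes $\omega$ non-zero on the plane, with a sign one can prescribe). Because $L$ is a disc, hence contractible, this pointwise rotation can be chosen coherently and compatibly with the orientation inherited from $L$, so a formal tilt always exists. The hard part, and the technical heart of the lemma, is to upgrade this pointwise prescription to an honest embedded, compactly supported ambient isotopy that interpolates across the caustic without ever letting $\omega_{\varepsilon}|_{S}$ pass through zero: the delicate locus is precisely the fold, where both $\omega|_{L}$ and $(dq_{1}\wedge dq_{2})|_{L}$ vanish, so one must arrange the tilt to be already fully effective (with $\omega|_{S}$ bounded below by a positive constant) before the sign of the base projection flips. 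I would carry this out by writing down explicit tilts in the fold and cusp normal forms provided by Lagrangian singularity theory and gluing them with a partition of unity, keeping every rotation uniformly small so that $S$ remains embedded; conceptually, the same conclusion is guaranteed by Gromov's $h$-principle for $\omega$-directed embeddings of the open surface $L$, with the coherent tilt above as the formal solution.
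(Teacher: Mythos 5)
Your route is genuinely different from the paper's, and it is worth seeing what the paper does, because it sidesteps every difficulty you wrestle with. The paper never looks at the caustic of the Lagrangian projection at all: it modifies the \emph{form} rather than the surface. On a tubular neighbourhood $V$ of $L$ one takes a closed $2$-form $\tau$ that restricts to an area form on $L$ and agrees with $dq_{1}\wedge dq_{2}$ outside a compact set, writes $\tau-dq_{1}\wedge dq_{2}=d\lambda$ with $\lambda$ compactly supported in $V$, and sets $\omega_{\varepsilon}^{\prime}=\omega+\varepsilon\left(dq_{1}\wedge dq_{2}+d\left(\rho\lambda\right)\right)$ for a bump function $\rho$ equal to $1$ near $L$; then $L$ is $\omega_{\varepsilon}^{\prime}$-symplectic by construction, and since $\omega_{\varepsilon}^{\prime}-\omega_{\varepsilon}=\varepsilon d\left(\rho\lambda\right)$ is exact with compactly supported primitive, Moser's trick gives a compactly supported diffeomorphism intertwining $\omega_{\varepsilon}$ and $\omega_{\varepsilon}^{\prime}$, which carries $L$ to an $\omega_{\varepsilon}$-symplectic surface. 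This is soft: no genericity of the projection, no singularity theory, no h-principle. Your alternative --- locate the failure locus $K$ and remove it by an ambient tilt --- is a legitimate strategy, and its clean implementation is exactly the h-principle you invoke at the end: the $\omega_{\varepsilon}$-symplectic $2$-planes form an \emph{open} subset $A$ of the Grassmannian, $L\cong\mathbb{R}^{2}$ is an open manifold, and your explicit small-rotation construction (supported near $K$, trivial outside a compact set) supplies the formal solution needed for the relative, $C^{0}$-dense form of Gromov's directed-embedding theorem; together with isotopy extension this does prove the lemma.

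As a hands-on proof, however, your write-up has a genuine gap, sitting exactly where you place the ``technical heart'', and it is compounded by a quantifier problem. You fix the tilt first and then take ``$\varepsilon$ small enough after the isotopy''. But in the annulus where your cutoff is only partially on, the derivative of the cutoff contributes to the tangent planes of $S=\psi_{1}\left(L\right)$, and nothing in your construction prevents $\omega|_{S}$ from dipping to some value $-\delta<0$ there; in that region $\left(dq_{1}\wedge dq_{2}\right)|_{S}$ is only bounded, say by $c^{\prime}>0$ from below, so $\omega_{\varepsilon}|_{S}\geq-\delta+\varepsilon c^{\prime}$ and positivity \emph{fails} for all $\varepsilon<\delta/c^{\prime}$: shrinking $\varepsilon$ makes your situation worse, not better. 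To rescue the argument you must either prove that the tilt can be built with $\omega|_{S}\geq0$ identically and $\omega|_{S}\geq c>0$ near $K$ --- which is precisely the unperformed fold/cusp normal-form work, and is where the cutoff terms fight you --- or let the tilt depend on $\varepsilon$ (permissible, since the lemma fixes $\varepsilon$ first, but not what you wrote). Also note that if the tilt is genuinely ambient, embeddedness of $S$ is automatic, so your worry about keeping rotations small for embeddedness is misplaced; the real enemy is the sign of $\omega|_{S}$ in the transition region. In short: your h-principle version is correct and genuinely different from the paper's, but the explicit construction as described does not yet prove the lemma, whereas the paper's Moser argument does so in a few lines.
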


\begin{proof}
\noindent On a closed tubular neighborhood $V$ of $L$, one can find
a closed 2-form $\tau$ such that $\exists$ a compactly supported
1-form $\lambda$ on $V$, $d\lambda=\tau-dq_{1}\land dq_{2}$, which
coincides with $dq_{1}\land dq_{2}$ outside a compact subset of $V$
and makes $L$ $\tau-$symplectic. Choose a bump function $\rho$
on $\mathbb{R}^{4}$ which vanishes outside $V$ and equals 1 near
$L$. Set
\[
\omega_{\varepsilon}^{\prime}=\omega+\varepsilon\left(dq_{1}\land dq_{2}+d\left(\rho\lambda\right)\right)
\]
then $L$ is $\omega_{\varepsilon}^{\prime}$-symplectic. The difference
$\omega_{\varepsilon}^{\prime}-\omega_{\varepsilon}=\varepsilon d\left(\rho\lambda\right)$,
which is 0 outside a compact subset of $\mathbb{R}^{4}$, and exact
inside it. So one can use Moser's linear construction to conclude
that $\omega_{\varepsilon}$ and $\omega_{\varepsilon}^{\prime}$
are isotopic. This isotopy is compactly supported and takes $L$ into
an $\omega_{\varepsilon}-$symplectic surface, denoted by $L^{\prime}.$ 
\end{proof}
\begin{lem}
\noindent There exists a symplectic form on $\mathbb{R}^{4}$ which
tames $J_{0}$, coincides with $\omega_{\varepsilon}$ on some given
subset, and with a multiple of the Fubini-Study metric outside a compact
subset. 
\end{lem}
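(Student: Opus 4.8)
The plan is to construct the desired symplectic form by interpolating between $\omega_\varepsilon$ on the prescribed subset and a multiple of the Fubini–Study form $\omega_{FS}$ outside a large compact set, while maintaining taming of $J_0$ throughout the interpolation. Recall that on the compactification $\mathbb{CP}^2 \supset \mathbb{R}^4$ (obtained by adding the line at infinity), the standard almost complex structure $J_0$ extends and is tamed by $\omega_{FS}$. The key structural observation I would exploit is that the condition "$\omega$ tames $J_0$," namely $\omega(v, J_0 v) > 0$ for all $v \neq 0$, is an \emph{open and convex} condition on the space of 2-forms at each point: if $\omega_0$ and $\omega_1$ both tame $J_0$, then so does every convex combination $(1-t)\omega_0 + t\omega_1$. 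This convexity is what makes a cutoff/interpolation argument viable, since the interpolated form will automatically remain taming at each point provided both endpoints do.

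First I would fix the data: let $U$ be the given subset on which we want to match $\omega_\varepsilon$, and observe that $\omega_\varepsilon = \omega + \varepsilon\, dq_1 \wedge dq_2$ tames $J_0$ for $\varepsilon$ small, since $\omega$ tames $J_0$ and taming is an open condition (the extra term is a small perturbation). Next, choosing the multiple $c\,\omega_{FS}$ of the Fubini–Study form large enough, I would arrange that near the region where we want the transition to occur, both $\omega_\varepsilon$ and $c\,\omega_{FS}$ are defined and each tames $J_0$. The natural interpolation is
\[
\widetilde{\omega} = \beta(x)\,\omega_\varepsilon + \big(1 - \beta(x)\big)\, c\,\omega_{FS},
\]
where $\beta$ is a smooth cutoff function equal to $1$ on $U$ (and on a neighborhood of it) and equal to $0$ outside a larger compact set. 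By the pointwise convexity of the taming condition, $\widetilde{\omega}(v, J_0 v) > 0$ everywhere, so $\widetilde{\omega}$ tames $J_0$; it equals $\omega_\varepsilon$ on $U$ and equals $c\,\omega_{FS}$ outside a compact subset, as required.

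The main obstacle is that such a convex-combination cutoff does not automatically produce a \emph{closed} 2-form: $d\widetilde{\omega} = d\beta \wedge (\omega_\varepsilon - c\,\omega_{FS})$ need not vanish in the transition region. I would handle this by working at the level of primitives rather than the forms themselves. Since $\omega_\varepsilon - c\,\omega_{FS}$ is exact on the annular transition region (both are closed and the relevant cohomology can be arranged to vanish there, e.g. by choosing the region to be a shell where both forms have primitives $\lambda_\varepsilon$ and $\lambda_{FS}$), I would instead interpolate the primitives, setting $\widetilde{\omega} = d\big(\beta \lambda_\varepsilon + (1-\beta)\lambda_{FS}\big)$ in the transition region, which is manifestly closed. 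The delicate point is reconciling closedness with the taming inequality: interpolating primitives does not give the clean convex combination of forms, so I must verify taming directly for the resulting $d\big(\beta\lambda_\varepsilon + (1-\beta)\lambda_{FS}\big)$. The standard resolution, which I expect to carry out, is to take the transition region thin and the cutoff gradient controlled, so that the extra terms involving $d\beta$ are small relative to the definite dominant term; since both endpoint forms tame $J_0$ with a uniform positive margin on the compact transition region, a sufficiently gradual cutoff keeps the inequality strict. Thus the genuine work lies in quantifying this margin and choosing $\beta$, $c$, and the width of the transition shell compatibly so that closedness and taming hold simultaneously.
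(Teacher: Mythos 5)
Your first step is sound and correctly isolates the central tension: the pointwise convex combination $\beta\,\omega_{\varepsilon}+(1-\beta)\,c\,\omega_{FS}$ tames $J_{0}$ but is not closed, and the error after passing to primitives is $d\beta\wedge(\lambda_{\varepsilon}-c\,\lambda_{FS})$. The gap is in your claimed resolution: this error term is \emph{not} small relative to the taming margin, for any choice of the multiple $c$, the transition radius, or the width of the shell, so the "sufficiently gradual cutoff" argument cannot close. Concretely, work in the coordinates where $\omega_{\varepsilon}$ is standard and let the transition occur in the shell $R\le\|z\|\le R+W$. The smallest eigenvalue of $c\,g_{FS}$ at radius $r$ is of order $c/r^{4}$ (the complex-radial direction), and near the outer edge, where $\beta\approx0$, that is the entire taming margin. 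On the other hand, primitives cannot be chosen small: integrating over a Hopf circle $\gamma_{r}$ bounding a complex-line disc gives $\int_{\gamma_{r}}\lambda_{\varepsilon}=\pi r^{2}$ and $\int_{\gamma_{r}}c\,\lambda_{FS}=c\pi r^{2}/(1+r^{2})$, so on each sphere of radius $r$ any choice of primitives satisfies $|\lambda_{\varepsilon}|\gtrsim r$ and $|c\,\lambda_{FS}|\gtrsim c/r$, whence the error has size at least of order $\frac{1}{W}\max\left(r,\,c/r\right)$ somewhere in the shell. Finally $c$ is not free: a $J_{0}$-complex line through the inner region already picks up area $\approx\pi R^{2}$ where the form equals $\omega_{\varepsilon}$, while its total area in the compactification is $c\pi$, so taming forces $c\gtrsim R^{2}$. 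Combining these, "error $\ll$ margin" at the outer radius $R_{out}=R+W$ requires $W\gg R_{out}^{3}\ge\max\left(R^{3},W^{3}\right)$, which is impossible once $R\ge1$; enlarging $c$ only inflates the $c/r$ term in the error proportionally. In the transition region the form one needs is genuinely far from both endpoints, so no perturbative argument can work.

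The paper avoids all estimates by interpolating one level lower: not the $2$-forms, not their primitives, but the K\"ahler \emph{potentials}. It takes a function interpolating between $\|z\|^{2}$ and $C\log\left(1+\|z\|^{2}\right)$, $C\gg0$, and defines the form as $\frac{i}{2}\partial\overline{\partial}$ of it; closedness is then automatic, and taming is also automatic provided the interpolated radial potential is chosen strictly convex (strictly plurisubharmonic), since then the form is K\"ahler for the standard complex structure --- a sign condition, achievable exactly because $C\gg R^{2}$, rather than a smallness condition. It may help to see that your primitive-level scheme is repairable along the same lines: take $\lambda_{\varepsilon}$ and $\lambda_{FS}$ to be $d^{c}$ of the two potentials (where $d^{c}f=-df\circ J_{0}$) and a radial cutoff $\beta(t)$, $t=\|z\|^{2}$; the error term is then $\beta'(t)\left(\mathrm{const}-\tfrac{C\,\mathrm{const}}{1+t}\right)dt\wedge d^{c}t$, and since $dt\wedge d^{c}t\left(v,J_{0}v\right)=dt(v)^{2}+dt\left(J_{0}v\right)^{2}\ge0$, $\beta'\le0$, and $C$ is large, this term is pointwise nonnegative on pairs $\left(v,J_{0}v\right)$ --- it helps taming rather than threatening it. But that is precisely the convexity mechanism of the paper in disguise, and it depends on choosing the primitives and the cutoff compatibly with $J_{0}$; it is not the smallness mechanism your proposal relies on.
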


\begin{proof}
\noindent Take an interpolation between $\parallel z\parallel^{2}$
and $C\cdot log\left(1+\parallel z\parallel^{2}\right),$ for $C\gg0$
sufficiently large. Take $\frac{i}{2}\partial\overline{\partial}$
of this function. If the function is strictly convex as a function
of the radius, then the obtained 2-form, which is an interpolation
between $\omega_{\varepsilon}$ applied in the compact subset where
the above isotopy takes place, and the Fubini-Study form 
\[
\omega_{FS}=C\cdot\frac{i}{2}\partial\overline{\partial}\left(log\left(1+\parallel z\parallel^{2}\right)\right),
\]
far away, is symplectic. Since $J_{0}$ is tamed by $\omega_{\varepsilon}$
and $\omega_{FS}$ at the same time, $J_{0}$ is tamed by the interpolated
2-form, too. Actually, recall that any symplectic form of the form
$\frac{i}{2}\partial\overline{\partial}f$ for a smooth real function
$f$ is a Kähler form with respect to the standard complex structure,
and hence the standard complex structure is compatible with this symplectic
form. 
\end{proof}
\noindent After the above interpolation, the symplectic volume of
$\mathbb{R}^{4}$ becomes finite, so that one can use a projective
line $\Gamma_{\infty}$ to replace remote areas in $\mathbb{R}^{4}$
where the Fubini-Study form is applied, and make $\mathbb{R}^{4}$
into $\mathbb{CP}^{2}$. For convenience, we simply call the above
process by ``compactifying $\mathbb{R}^{4}$ into $\mathbb{CP}^{2}$''. 
\begin{lem}
\noindent One can compactify $\left(\mathbb{R}^{4},\omega_{\varepsilon}\right)$
into $\mathbb{CP}^{2}$ by adding a line $\Gamma_{\infty}$ at infinity,
while making the symplectic plane $l=\left\{ \left(p_{1},p_{2}\right)=\left(0,0\right)\right\} $
a projectice line $\gamma_{0}$, $L$ an symplectic embedded sphere
$\Sigma$, which is homologus to $\mathbb{CP}^{1}$ and intersects
the infinity line $\Gamma_{\infty}$ at the same point $P$ as $\Gamma_{\infty}\cap\gamma_{0}$.
\end{lem}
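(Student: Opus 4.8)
The plan is to combine the two preceding lemmas into a single compactification. By Lemma~14, after a compactly supported isotopy we have replaced $\omega_\varepsilon$ outside a compact set by a large multiple of the Fubini--Study form while keeping $J_0$ tamed throughout, and crucially keeping $\omega_\varepsilon$ unchanged on the compact region where $L'$ and the relevant part of $l$ live. First I would check that with respect to this interpolated form the total symplectic volume of $\mathbb{R}^4$ is finite, which is what permits the one-point-at-infinity compactification: the K\"ahler potential $C\cdot\log(1+\|z\|^2)$ is exactly the potential pulled back from $\mathbb{CP}^2$ under the standard affine chart $\mathbb{C}^2\hookrightarrow\mathbb{CP}^2$, so gluing in the line at infinity $\Gamma_\infty$ produces a genuine closed symplectic manifold symplectomorphic to $(\mathbb{CP}^2,C\cdot\omega_{FS})$, with $J_0$ extending to the standard integrable complex structure on $\mathbb{CP}^2$.

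Next I would verify the claimed homological and geometric data. Since $l=\{p_1=p_2=0\}$ is the affine real plane $\{(q_1,q_2)\}\subset\mathbb{C}^2$ sitting inside the affine chart as a complex line (identifying $z_j=q_j+ip_j$, the locus $p_1=p_2=0$ is $\mathrm{Im}(z_1)=\mathrm{Im}(z_2)=0$, which is a \emph{totally real} plane, not a complex line) --- so here care is needed. The cleaner route, and the one I expect the authors to take, is not to insist that $l$ itself is already a complex line but to observe that $l$ coincides with the standard Lagrangian plane outside a compact set, and that after compactification its closure $\gamma_0$ is a smoothly embedded sphere meeting $\Gamma_\infty$ in a single point $P$; one then arranges, using the freedom in choosing the taming form near infinity together with a further compactly supported Moser-type adjustment, that $\gamma_0$ is genuinely $J_0$-holomorphic near $\Gamma_\infty$ and hence homologous to the generator $\mathbb{CP}^1$. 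The class of the closure $\Sigma$ of $L$ is then forced: $L$ agrees with $l$ outside a compact set, so $\Sigma$ and $\gamma_0$ have the same intersection behaviour with $\Gamma_\infty$, meet it at the common point $P=\gamma_0\cap\Gamma_\infty$, and represent the same homology class $[\mathbb{CP}^1]$ because $[\Sigma]-[\gamma_0]$ is supported in a compact set and bounds.

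I would then confirm that $\Sigma$ is an embedded symplectic sphere for the compactified form: embeddedness and the sphere topology follow because $L\cong\mathbb{R}^2$ is closed up by adding the single point $P$ at infinity where it is glued to the line, and the symplectic condition is inherited from $L'$ being $\omega_\varepsilon$-symplectic on the compact region and from $l$ being symplectic near infinity, these two regions overlapping where both conditions hold. The main obstacle I anticipate is precisely the compatibility at infinity: ensuring that the gluing makes $\Sigma$ \emph{smoothly} embedded across $\Gamma_\infty$ and tangent to the correct complex direction so that it closes up to a sphere meeting $\Gamma_\infty$ transversally at the single prescribed point $P$, rather than acquiring a singularity or an extra intersection. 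This requires exploiting that $L$ is flat (agrees with $l$) near infinity so that the two closures share the same germ at $P$, and then checking that the positivity-of-intersection count (Theorem~5) is consistent with the single transverse intersection $[\Sigma]\cdot[\Gamma_\infty]=1$ dictated by the homology class --- any failure of embeddedness or an extra tangency would violate this count, so the count serves both as the obstruction to watch and as the confirmation once the flatness-at-infinity normal form is in place.
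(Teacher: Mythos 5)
Your proposal contains the right warning sign but the wrong cure, and the cure is where the actual content of the lemma lies. You correctly observe that with the identification $z_j = q_j + ip_j$ the plane $l = \{p_1 = p_2 = 0\}$ is totally real, not complex. But your proposed repair --- ``freedom in choosing the taming form near infinity together with a further compactly supported Moser-type adjustment'' to make $\gamma_0$ holomorphic near $\Gamma_\infty$ --- cannot work. First, holomorphicity of a surface is a condition on its tangent planes relative to the complex structure; no choice of symplectic/taming form changes whether a totally real plane is $J_0$-complex. Second, the defect sits entirely at infinity, where a compactly supported adjustment does nothing: in the standard affine chart the closure of the real plane $l$ inside $\mathbb{CP}^2$ is a copy of $\mathbb{RP}^2$, which meets $\Gamma_\infty$ in a whole circle $\mathbb{RP}^1$ of asymptotic directions. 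So under your compactification $\gamma_0$ is not a sphere, does not represent $[\mathbb{CP}^1]$ (indeed $[\mathbb{RP}^2]$ is not even an integral $2$-cycle class of degree one), and does not meet $\Gamma_\infty$ at a single point $P$; the same failure then infects $\Sigma$, since $L$ agrees with $l$ at infinity. Your subsequent homology and positivity-of-intersection arguments all presuppose exactly what has broken down.

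What the paper does instead --- and what your proposal is missing --- is to exploit the deformation to $\omega_\varepsilon$ from Lemma 11, whose entire purpose is that $l$ and $L'$ are now $\omega_\varepsilon$-\emph{symplectic} planes. One chooses a \emph{linear} complex structure $j$ on $\mathbb{R}^4$ compatible with $\omega_\varepsilon$ rather than with $\omega$; concretely, in the new complex-linear coordinates $z_1 \sim \left(p_1 - \tfrac{\varepsilon}{2}q_2,\, q_1\right)$, $z_2 \sim \left(\tfrac{2}{\varepsilon}p_2 + q_1,\, \tfrac{\varepsilon}{2}q_2\right)$ the form $\omega_\varepsilon$ becomes the standard K\"ahler form $dx_1\wedge dy_1 + dx_2\wedge dy_2$ and $l$ becomes the honest complex line $\{z_1 = iz_2\}$. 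Running the interpolation of Lemma 12 and the one-line compactification \emph{in these coordinates}, a complex line closes up to a projective line $\gamma_0$ meeting $\Gamma_\infty$ transversally at one point $P$, and $L'$ --- which coincides with $l$ outside a large ball --- closes up to an embedded $\omega_\varepsilon$-symplectic sphere $\Sigma$ sharing the same germ at $P$ and hence the same degree-one class $[\mathbb{CP}^1]$. In short, the lemma is not proved by adjusting forms after compactifying with the standard complex structure; it is proved by changing the complex structure (equivalently, the linear coordinates) \emph{before} compactifying, which is exactly what the passage from $\omega$ to $\omega_\varepsilon$ was set up to allow.
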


\begin{proof}
\noindent Due to our choice of the bump function $\rho$ vanishing
outside a tubular neibourhood $V$, $L^{\prime}$ agrees with $L$
at infinity (thus with the standard plane $l$ too) when applying
the isotopy. This allows us to, as in Lemma 12, compactify $\left(\mathbb{R}^{4},\omega_{\varepsilon}\right)$
with coordinates 
\[
\left(\mathbb{R}^{4}=\left(p_{1},q_{1},p_{2},q_{2}\right),\omega_{\varepsilon}=\left(dp_{1}-\frac{\varepsilon}{2}dq_{2}\right)\wedge dq_{1}+\left(dp_{2}+\frac{\varepsilon}{2}dq_{1}\right)\wedge dq_{2}\right)
\]
 into $\mathbb{CP}^{2}$ with coordiantes $\left[z_{1},z_{2}\right]$,
namely $z_{1}\sim\left(x_{1},y_{1}\right)\sim\left(p_{1}-\frac{\varepsilon}{2}q_{2},q_{1}\right),z_{2}\sim\left(x_{2},y_{2}\right)\sim\left(\frac{2}{\varepsilon}p_{2}+q_{1},\frac{\varepsilon}{2}q_{2}\right)$
and make use of the results of pseudo holomorphic curves in $\mathbb{CP}^{2}$.
Choose an almost complex structure $j$ (which is also complex here
in $\mathbb{R}^{4}$), such that $\omega_{\varepsilon}\left(\cdot,j\cdot\right)$
is a Euclidian metric. Take a large ball $B$ such that $L^{\prime}$
coinsides with $l$ outside it, and compactify the ball to $\mathbb{CP}^{2}$
by adding a line $\Gamma_{\infty}$ at infinity. Denote the compactifications
of $\omega_{\varepsilon}$ and $j$ by $\Omega$ and $J_{0}$ respectively.
After such a compactication the symplectic line $l$ corresponds to
a projective line $\gamma_{0}=\left\{ z_{1}=iz_{2}\right\} $ and
our knot $L$ is compactified to an symplectic embedded sphere $\Sigma$,
which is homologus to $\mathbb{CP}^{1}$ and intersects $\Gamma_{\infty}$
at the same point $P$ as $\Gamma_{\infty}\cap\gamma_{0}$. See Figure
5,2.1.
\end{proof}
\noindent In order to prove the theorem, it is enough to show that:
\begin{thm}
\noindent $\Sigma$ is smoothly isotopic to the projective line $\gamma_{0}$
defined in Lemma 13 via an isotopy of $\mathbb{CP}^{2}$ which fixes
$\Gamma_{\infty}$. Moreover, this isotopy can be taken to fix a neighborhood
of $\Gamma_{\infty}$.
\end{thm}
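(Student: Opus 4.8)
The plan is to run Gromov's pseudo-holomorphic foliation argument in the spirit of Eliashberg--Polterovich, built from the pencil of $J$-holomorphic lines through the point $P$. First I would upgrade the symplectic sphere $\Sigma$ to a pseudo-holomorphic one: since $\Sigma$ is an embedded $\Omega$-symplectic submanifold, there is an $\Omega$-tame almost complex structure $J$ for which $\Sigma$ is $J$-holomorphic, and because $\Sigma$ coincides with the compactification of $l$, hence with $\gamma_{0}$, in a neighborhood of $P$ where $J_{0}$ already makes both holomorphic, I can take $J=J_{0}$ on a fixed neighborhood of $\Gamma_{\infty}$. In particular $\Sigma$, $\gamma_{0}$ and $\Gamma_{\infty}$ are all $J$-holomorphic, $\Sigma$ and $\gamma_{0}$ are literally equal near $P$ (so $T_{P}\Sigma=T_{P}\gamma_{0}$), and $\Sigma$ lies in the line class $L=[\mathbb{CP}^{1}]$.

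Next I would invoke the existence and uniqueness theory for lines established above, together with positivity of intersection. For a tame $J$ the class $L$ is of minimal positive $\Omega$-area and is primitive in $H_{2}(\mathbb{CP}^{2})$, so a $J$-line cannot degenerate: a nodal limit would split $L$ into effective classes of positive area, which is impossible. Combined with automatic transversality in dimension four (embedded genus-zero curves with $c_{1}\cdot L=3>0$ are regular), the moduli space of $J$-lines through $P$ is a smooth compact manifold diffeomorphic to $\mathbb{CP}^{1}$; its members pass through $P$ and are otherwise pairwise disjoint by positivity of intersection (as $L\cdot L=1$), and they foliate $\mathbb{CP}^{2}$ with the single singular point $P$. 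Both $\Sigma$ (the leaf with tangent $T_{P}\Sigma$) and $\Gamma_{\infty}$ are leaves of this pencil $\mathcal{F}_{J}$; the standard structure $J_{0}$ gives the analogous pencil $\mathcal{F}_{J_{0}}$ whose leaves include $\gamma_{0}$ and $\Gamma_{\infty}$.

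To produce the isotopy I would connect $J$ to $J_{0}$ through tame structures $J_{t}$ that are all equal to $J_{0}$ on the fixed neighborhood of $\Gamma_{\infty}$, which is possible by the contractibility of the space of tame almost complex structures carried out rel that neighborhood. Each $J_{t}$ yields, as above, a pencil $\mathcal{F}_{t}$ of lines through $P$; since $P\in\Gamma_{\infty}$ and $J_{t}=J_{0}$ near $\Gamma_{\infty}$, the pair $(T_{P}\mathbb{CP}^{2},J_{t})$ is independent of $t$, so all leaf spaces are canonically $\mathbb{P}(T_{P}\mathbb{CP}^{2})\cong\mathbb{CP}^{1}$ and the direction $T_{P}\gamma_{0}$ is distinguished and fixed. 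The leaf through $P$ in that direction is $\Sigma$ at $t=0$ and $\gamma_{0}$ at $t=1$, while $\Gamma_{\infty}$ remains a leaf throughout. After blowing up $P$, the $\mathcal{F}_{t}$ become a smooth one-parameter family of honest fibrations $\pi_{t}\colon\widetilde{\mathbb{CP}^{2}}\to\mathbb{CP}^{1}$ sharing the same fiber identification and agreeing near the proper transform of $\Gamma_{\infty}$. Choosing a connection and integrating the $t$-derivative of this family produces an ambient isotopy $\phi_{t}$ with $\pi_{t}\circ\phi_{t}=\pi_{0}$, which I arrange to be the identity near $\Gamma_{\infty}$; blowing down, $\phi_{t}$ fixes $P$ and carries the distinguished leaf of $\mathcal{F}_{0}$ to that of $\mathcal{F}_{t}$. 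At $t=1$ it sends $\Sigma$ to $\gamma_{0}$, fixes $\Gamma_{\infty}$ together with a neighborhood, and is the desired isotopy.

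The analytic heart --- existence, smoothness and compactness of the pencils --- is essentially free from the minimality of $L$ and automatic transversality, so I expect the main obstacle to be the \emph{soft} final step: turning the smooth family of singular foliations $\mathcal{F}_{t}$ into an honest ambient isotopy that is simultaneously the identity on a neighborhood of $\Gamma_{\infty}$ and smooth across the base point $P$. Controlling the behavior at $P$, where all leaves concentrate, is what forces the blow-up and the careful choice of fiberwise identification, and reconciling this with the rigid requirement near $\Gamma_{\infty}$ is the delicate bookkeeping of the argument.
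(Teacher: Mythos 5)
Your overall strategy is the paper's: interpolate through tame almost complex structures that keep $\Gamma_{\infty}$ holomorphic, from a $J$ making $\Sigma$ holomorphic to the standard $J_{0}$, and use Gromov's existence/uniqueness of degree-one curves together with positivity of intersection to move $\Sigma$ to $\gamma_{0}$ through embedded lines meeting $\Gamma_{\infty}$ exactly once, transversally, at $P$. The only packaging difference is how the moving line is selected: you fix the complex tangent direction $T_{P}\gamma_{0}$ and take the corresponding leaf of the pencil through $P$ (the picture in Remark 15), whereas the paper takes the unique $J(t)$-line through $P$ and a moving point $Q(t)$ with $Q(0)\in\gamma_{0}$, $Q(1)\in\Sigma$; these are interchangeable, and your normalization $J_{t}\equiv J_{0}$ near $\Gamma_{\infty}$ is a reasonable strengthening of the paper's condition that $\Gamma_{\infty}$ merely stay $J(t)$-holomorphic.

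The gap is exactly at the point you defer as ``delicate bookkeeping,'' and it is not bookkeeping. You claim that because $J_{t}=J_{0}$ on a neighborhood of $\Gamma_{\infty}$, the blown-up fibrations $\pi_{t}$ agree near the proper transform of $\Gamma_{\infty}$, and you then demand both $\pi_{t}\circ\phi_{t}=\pi_{0}$ and $\phi_{t}=\mathrm{id}$ near $\Gamma_{\infty}$. This is false: a pseudo-holomorphic line is a global object, and a leaf of $\mathcal{F}_{t}$ passing close to $\Gamma_{\infty}$ also traverses the region where $J_{t}\neq J_{0}$, so its position depends on all of $J_{t}$; in general the only leaf common to every pencil is $\Gamma_{\infty}$ itself. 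In particular your distinguished leaf $C_{t}$ is merely tangent to $\gamma_{0}$ at $P$, not equal to $\gamma_{0}$ near $P$. That defeats the construction as stated, since if $\phi_{t}$ is the identity on a neighborhood $U$ of $\Gamma_{\infty}$ and $\phi_{t}(\Sigma)=C_{t}$, then necessarily $C_{t}\supseteq\phi_{t}(\Sigma\cap U)=\gamma_{0}\cap U$ for every $t$, which the holomorphic leaves do not satisfy. The repair --- and this is what the paper's closing deformation step is doing, in its own sketchy way --- is to first correct the family $\left\{ C_{t}\right\}$ non-holomorphically: near $P$ choose $J_{0}$-complex coordinates $\left(z,w\right)$ with $\gamma_{0}=\left\{ w=0\right\}$ and $\Gamma_{\infty}=\left\{ z=0\right\}$; each $C_{t}$ is there a graph $w=f_{t}\left(z\right)$ with $f_{t}\left(0\right)=0$ and $f_{t}^{\prime}\left(0\right)=0$ by the common tangency, and replacing $f_{t}$ by $\left(1-\chi\right)f_{t}$ for a cut-off $\chi$ equal to $1$ near $z=0$ gives a smooth family of embedded spheres, unchanged at $t=0,1$, still meeting $\Gamma_{\infty}$ only at $P$, and now literally equal to $\gamma_{0}$ near $P$. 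Holomorphicity is lost but no longer needed; applying the isotopy extension theorem to this corrected family, with the generating vector field cut off near $\Gamma_{\infty}$ where the family is constant, yields the ambient isotopy fixing a neighborhood of $\Gamma_{\infty}$.
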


\begin{rem}
\noindent Given a compatible $J$, a $J-$holomorphic line on $\mathbb{CP}^{2}$
is an embedded 2-sphere $C\subset\mathbb{CP}^{2}$ which is homologous
to $\mathbb{CP}^{1}$ and whose tangent space $T_{x}C$ is $J-$invariant
for all points $x\in C$. Under this definition, Theorem 8 is to say
that for each compatible almost complex structure $J$ on $\mathbb{CP}^{2}$
and for each two distinct points $A,B\in\mathbb{CP}^{2}$ there exists
a unique $J-holomorphic$ line which passes through $A$ and $B$.
Moreover this line depends smoothly on $J,A,B$.

\noindent This implies that given a point $x_{0}\in\mathbb{CP}^{2}$,
there is a pencil of lines based on this point, i.e. a set of pseudo-holomorphic
lines that all intersect at $x_{0}$. This pencil of lines forms a
foliation away from a point, where each leaf is determined by its
tangency at the point $x_{0}$. 
\end{rem}

\begin{proof}
\noindent 
\begin{figure}
\includegraphics{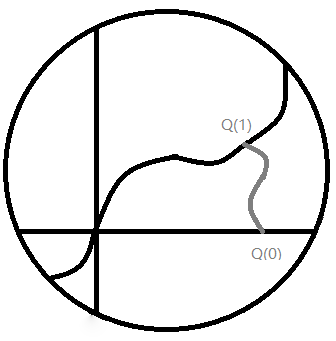}

\caption{Fig.2 in \cite{key-10}. The vertical line is $\Gamma_{\infty}$,
horizonal lines are $\gamma_{0}$ and $\Sigma$. Gray line is the
path $Q\left(t\right)$.}

\end{figure}

\noindent \LyXZeroWidthSpace{}

\noindent (See Figure 5,2.1) Choose a smooth path $Q\left(t\right),t\in\left[0,1\right]$
on $\mathbb{CP}^{2}-\Gamma_{\infty}$ such that $Q\left(0\right)\in\gamma_{0}$
and $Q\left(1\right)\in\Sigma$. There exists a smooth family of compatible
almost complex structures $J\left(t\right),t\in\left[0,1\right]$
such that 

\noindent $J\left(0\right)=J_{0}$

\noindent $\Sigma$ is $J\left(1\right)$-holomorphic 

\noindent $\Gamma_{\infty}$ is $J\left(t\right)$-holomorphic for
all t.

\noindent For the existence, one can first choose some appropriate
almost complex structure along $\Gamma_{\infty}$, or say a section
on the bundle of compatible actions over $\Gamma_{\infty}$. Since
the space of compatible almost complex structures forms a contractible
space, we can extend this section to $\mathbb{CP}^{2}$, which gives
$J\left(t\right)$. 

\noindent By Theorem 8, there exists a unique $J\left(t\right)-$holomorphic
line, denoted by $C\left(t\right)$ passing through $P$ and $Q\left(t\right)$
changing smoothly w.r.t. parameter $t$. The deformation $\left\{ C\left(t\right),t\in\left[0,1\right]\right\} $
gives an isotopy between $\gamma_{0}$ and $\Sigma$. 

\noindent Before we finally extend $\left\{ C\left(t\right),t\in\left[0,1\right]\right\} $
to an ambient isotopy of $\mathbb{CP}^{2}$ which preserves $\Gamma_{\infty}$,
it remains to show that $C\left(t\right)$ intersects $\Gamma_{\infty}$
at the unique point $P$ transversally. If not,namely $C\left(t\right)$
intersect $\Gamma_{\infty}$ at more than one points(counted by multiplicity),
by positivity of intersection in dim 4, each of them contributes positively
to the intersection number $C\left(t\right)\cdot\Gamma_{\infty}$,
so the number is bigger than one. However, $C\left(0\right)\cdot\Gamma_{\infty}=1$,
contradicts the fact that the intersection number should remain unchanged
for homologus lines.

\noindent Additionally, after a smooth deformation of $Q\left(t\right)$,
one can require that for all $t\in\left[0,1\right]$, $Q\left(t\right)\cap\gamma_{0}$
not only at the point $x_{o}$, but also in a neighborhood $I\subset\gamma_{0}$
of $x_{0}$. Here it's important that $Q\left(t\right)$ is transverse
to $\Gamma_{\infty}$. Then one can make the isotopy intersect $I\times\Gamma_{\infty}$
only at $I$, thus obtain a compactly supported isotopy. 
\end{proof}

\section{Local Unknottedness of One-punctured Planar Lagrangians}

\subsection{The Nearby Lagrangian conjecture for the cylinder}

\noindent We first explain the statement of \cite[Theorem B]{key-9}
and show that this is actually equivalent to the nearby Lagrangian
conjecture for the cylinder. 

\noindent By cutting a torus $\mathbb{T}^{2}=\left(S^{1}\times S^{1}\right)$
parameterized by $\theta_{1}$ and $\theta_{2}$ between $\left\{ \theta_{2}=s\right\} $
and $\left\{ \theta_{2}=t\right\} $, we get two cylinders. This motivates
us to realize that a Lagrangian cylinder inside the cotangent bundle
of $T^{*}\left(S^{1}\times I\right)$ which is standard near the boundary
can be extended to a Lagrangian torus in $T^{*}\left(S^{1}\times S^{1}\right)$
which is equal to the zero section in ${\theta_{2}\in\left[-\delta,\delta\right]}$
in the base. Namely, this isotopy of a Lagrangian torus keeps $S^{1}\times e^{i\left[-\delta,\delta\right]}$
fixed, so it is actually an isotopy of the rest part of a torus, which
is a cylinder. We start by recalling the following result. 
\begin{thm}
\noindent (\cite[Theorem B(2)]{key-9}, 2019) Suppose that $L\subset(T^{*}\mathbb{T}^{2},d\lambda_{\mathbb{T}^{2}})$
is an exact Lagrangian embedding. Then for any $\theta\in S^{1}$
consider the properly embedded Lagrangian disc with one interior point
removed
\[
\dot{D}_{\mathbf{P}^{0}}\left(\theta\right):=\left(S^{1}\times\left\{ \theta\right\} \right)\times\left(\left\{ p_{1}^{0}\right\} \times(-\infty,p_{2}^{0}]\right)\subset\mathbb{T}^{2}\times\mathbb{R}^{2}=T^{*}\mathbb{T}^{2},
\]
\[
\mathbf{p}^{0}:=\left(p_{1}^{0},p_{2}^{0}\right)
\]
If it is the case that
\[
L\cap\dot{D}_{\mathbf{p}^{0}}\left(e^{is}\right)=\partial\dot{D}_{\mathbf{p}^{0}}\left(e^{is}\right)=S^{1}\times\left\{ e^{is}\right\} \times\left\{ \mathbf{p}^{0}\right\} 
\]
holds for all $\mid s\mid<\epsilon$, then the Hamiltonian isotopy
can be assumed to be supported outside of the subset
\[
\bigcup_{\mid s\mid<\delta}\dot{D}_{\mathbf{p}^{0}}\left(e^{is}\right):=S^{1}\times e^{i\left[-\delta,\delta\right]}\times\left\{ p_{1}^{0}\right\} \times(-\infty,p_{2}^{0}]
\]
for some $0<\delta<\epsilon$ sufficiently small (note that for symplectic
action reasons, we may not be able to Hamiltonian isotope the Lagrangian
to the constant section $\mathbb{T}^{2}\times\left\{ \mathbf{p}^{0}\right\} $).
\end{thm}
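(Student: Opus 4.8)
The plan is to treat the existence of a classifying Hamiltonian isotopy $\{\phi_t\}$ carrying $L$ to standard position as the core of the statement, and to concentrate on the genuinely new clause, namely that $\{\phi_t\}$ may be chosen supported \emph{outside} the region
$R:=\bigcup_{|s|<\delta}\dot{D}_{\mathbf{p}^{0}}(e^{is})=S^1\times e^{i[-\delta,\delta]}\times\{p_1^0\}\times(-\infty,p_2^0]$.
I would produce $\{\phi_t\}$ as the ambient deformation induced by a smooth one-parameter family of pseudo-holomorphic foliations, in the spirit of the construction recalled in Section 3: a path of $\omega$-compatible almost complex structures $J(t)$ together with, via the unique-line property transported to this setting (cf.\ Theorem 8), a corresponding path of foliations whose leaves drag $L$ to standard form as $t$ runs over $[0,1]$. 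The entire task then reduces to arranging this family so that every leaf relevant to the deformation meets each disc $\dot{D}_{\mathbf{p}^{0}}(e^{is})$ only along its boundary circle $\partial\dot{D}_{\mathbf{p}^{0}}(e^{is})=S^1\times\{e^{is}\}\times\{\mathbf{p}^{0}\}$, exactly as $L$ itself does by hypothesis.

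The mechanism I would use is a reflection symmetry pinned to the discs. In coordinates $(\theta_1,\theta_2,p_1,p_2)$ with $\omega=dp_1\wedge d\theta_1+dp_2\wedge d\theta_2$, the map $\rho(\theta_1,\theta_2,p_1,p_2)=(\theta_1,-\theta_2,2p_1^0-p_1,p_2)$ is an anti-symplectic involution, $\rho^*\omega=-\omega$, whose fixed locus is the cylinder $\{\theta_2=0,\ p_1=p_1^0\}$ containing $\dot{D}_{\mathbf{p}^{0}}(1)$; following $\rho$ by translation by $2s$ in the $\theta_2$-variable gives an involution $\sigma_s$ whose fixed cylinder contains $\dot{D}_{\mathbf{p}^{0}}(e^{is})$. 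I would therefore normalize $J(t)$, in a neighborhood of $R$, to be invariant under $\theta_2$-translation and anti-invariant under $\rho$, so that every $\sigma_s$ with $|s|<\delta$ is anti-holomorphic there. Each fixed cylinder is then totally real, and Schwarz reflection carries $J(t)$-holomorphic leaves to $J(t)$-holomorphic leaves.

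Now the hypothesis enters. Since $L\cap\dot{D}_{\mathbf{p}^{0}}(e^{is})=\partial\dot{D}_{\mathbf{p}^{0}}(e^{is})$ for $|s|<\epsilon$, the Lagrangian is disjoint from the open interior of every disc in the family. Suppose a leaf of the foliation did enter such an interior; reflecting it across the totally real cylinder doubles it to a closed $J(t)$-holomorphic curve, and by positivity of intersection in dimension four (Theorem 7) every interior crossing of the original leaf with the disc forces a strictly positive contribution to the intersection number of the double with the reflected disc. Along the path $t\in[0,1]$ this homological count is constant, and at the standard end it attains its minimal value, the leaves there meeting the discs only at the boundary; hence no interior crossing can occur for any $t$, and the foliation stays out of the interiors. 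The deformation it induces then fixes $R$, and an equivariant cutoff of the generating Hamiltonian near $R$ yields a Hamiltonian isotopy supported in the complement of $R$ for a suitable $0<\delta<\epsilon$.

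The hard part will not be any single curve estimate but the parametric and relative bookkeeping that makes the symmetry argument legitimate. First, one must construct the path $J(t)$ that is simultaneously $\omega$-compatible, $\rho$-anti-invariant, and $\theta_2$-translation-invariant near $R$, while remaining admissible for the global foliation; contractibility of the space of compatible almost complex structures (Lemma 4) supplies the extension, but interpolating the reflection-symmetric local model to the global structure across the walls $\theta_2=\pm\delta$ and the floor $p_2=p_2^0$ without destroying compatibility requires care. Second, and more seriously, the discs are non-compact half-cylinders, so the count against the doubled disc must be made rigorous in a compactified or relative setting, and one must rule out interior crossings being born in the interior or escaping through the puncture along the family; this is where a continuity argument for the relative intersection number, combined with the action and energy bounds coming from exactness of $L$, has to be supplied. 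Finally, passing from the setwise $\sigma_s$-invariance of the foliation to genuine pointwise support-localization on the three-dimensional region $R$ demands an equivariant normalization of the induced flow, which I expect to be the most delicate point of the whole construction.
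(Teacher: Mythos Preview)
This theorem is not proved in the paper; it is quoted from \cite[Theorem B(2)]{key-9} as background, and the paper's own contribution is the strengthening in Theorem~17, whose proof follows \cite[Section~9]{key-9}. So there is no ``paper's own proof'' of this particular statement to compare against. Nonetheless, since the paper does rehearse the mechanism from \cite{key-9} in proving Theorem~17, one can compare your proposal to that.

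Your reflection scheme is genuinely different from what \cite{key-9} (and hence this paper) does, and it contains a real gap. The argument in \cite[Section~9]{key-9} builds an embedded solid torus $\dot{\mathcal T}\subset T^*\mathbb T^2$ with core removed, foliated by pseudo-holomorphic punctured discs with boundary on $L$, seeded by the explicit standard leaves $C(e^{is})$ available where $L$ is standard. The support restriction then comes from a \emph{direct} positivity-of-intersection argument between the leaves of $\dot{\mathcal T}$ and auxiliary $J$-holomorphic cylinders $C(p_2,e^{is})$ (for $p_2\neq p_2^0$) that fill out the region to be avoided; no anti-symplectic involution or Schwarz reflection enters.

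The gap in your proposal is the sentence ``reflecting it across the totally real cylinder doubles it to a closed $J(t)$-holomorphic curve.'' Schwarz reflection closes up a curve whose \emph{boundary lies on} the totally real fixed locus. Here the leaves have boundary on $L$, not on $\mathrm{Fix}(\sigma_s)$; reflecting such a leaf yields a second disc with boundary on $\sigma_s(L)$, and nothing glues these into a closed curve. An interior intersection of a leaf with $\dot D_{\mathbf p^0}(e^{is})\subset\mathrm{Fix}(\sigma_s)$ is just a transverse crossing of a totally real surface, which carries no homological intersection count on its own. To salvage the idea you would need the leaf and its reflection to share boundary, i.e.\ $\sigma_s(L)=L$ near the disc, which is strictly stronger than the stated hypothesis $L\cap\dot D_{\mathbf p^0}(e^{is})=\partial\dot D_{\mathbf p^0}(e^{is})$. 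The route taken in \cite{key-9}---intersecting the moduli of discs with genuine $J$-holomorphic cylinders rather than with a totally real locus---avoids this problem entirely and also sidesteps the non-compactness issues you correctly anticipate.
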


\noindent 
\noindent The above result can in particular be applied to the torus
which is an extension of a Lagrangian cylinder in $T^{*}\left(S^{1}\times I\right)$
which is standard above the boundary. However, we need to strengthen
it in the following manner: make sure that the Hamiltonian isotopy
of the torus is fixed above the entire subset $\theta_{2}\in\left[-\delta,\delta\right]$. 

\noindent We prove the following strengthening of the result, 
\begin{thm}
\noindent (The nearby Lagrangian conjecture for the cylinder) Let
$L\subset T^{*}\mathbb{T}^{2}$ be an exact Lagrangian torus which
coincides with the zero section above the subset $\theta_{2}\in\left[-\delta,\delta\right]$.
Then $L$ is Hamiltonian isotopic to the zero section by a Hamiltonian
isotopy which is supported in the same subset. 
\end{thm}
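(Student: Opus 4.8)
The plan is to obtain Theorem 17 from Theorem 16 by applying the latter at the distinguished fibre value $\mathbf{p}^{0}=(0,0)$ and then upgrading its support statement from a single half-disc bundle to the whole cotangent fibre over a central sub-slab. To begin, since $L$ coincides with the zero section $Z$ over $\theta_{2}\in[-\delta,\delta]$, the only points of $L$ lying over $\theta_{2}=s$ with $|s|<\delta$ are points of $Z$, so
\[
L\cap\dot{D}_{(0,0)}(e^{is})=S^{1}\times\{e^{is}\}\times\{(0,0)\}=\partial\dot{D}_{(0,0)}(e^{is}).
\]
This is exactly the hypothesis of Theorem 16, which therefore yields a Hamiltonian isotopy $\phi_{t}$ with $\phi_{0}=\mathrm{id}$ and $\phi_{1}(L)=Z$, supported outside the half-disc bundle $W_{-}:=\{\theta_{2}\in[-\delta,\delta],\ p_{1}=0,\ p_{2}\le 0\}$. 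In particular $\phi_{t}$ fixes $W_{-}$ for all $t$, and since the zero-section locus over the slab is the boundary of $W_{-}$, it also fixes $Z$ over the slab pointwise.

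Next I would produce a complementary barrier by symmetry. The map $R(\theta_{1},\theta_{2},p_{1},p_{2})=(\theta_{1},-\theta_{2},p_{1},-p_{2})$ is an exact symplectomorphism, since $R^{*}\lambda_{\mathbb{T}^{2}}=\lambda_{\mathbb{T}^{2}}$, and it preserves both $Z$ and the slab while carrying $W_{-}$ to $W_{+}:=\{\theta_{2}\in[-\delta,\delta],\ p_{1}=0,\ p_{2}\ge 0\}$. As $R(L)$ is again an exact torus equal to $Z$ over the slab, Theorem 16 applies to it; conjugating the resulting isotopy by $R$ gives a second Hamiltonian isotopy $\phi_{t}'$ with $\phi_{1}'(L)=Z$ that is supported outside $W_{+}$. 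Note, however, that each of $\phi_{t},\phi_{t}'$ individually fixes only one of the two half-lines making up the wall $\{\theta_{2}\in[-\delta,\delta],\ p_{1}=0\}$, so neither by itself is prevented from crossing that wall over the slab.

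The main step, which I expect to be the real obstacle, is to turn this data into a single Hamiltonian isotopy that is the identity on the entire fibre over a smaller slab $\theta_{2}\in[-\delta',\delta']$, and not merely on the codimension-one walls $W_{\pm}$. Even granting that an isotopy fixes the full wall $\{p_{1}=0\}$ over the slab, together with compact support this only forces the two fibre half-planes $\{p_{1}>0\}$ and $\{p_{1}<0\}$ to be preserved, not fixed, and the zero section lies on the wall itself, so one cannot simply cut the generating Hamiltonian along $\{p_{1}=0\}$. I would attempt two routes. The first is to re-enter the construction underlying Theorem 16 and reinforce the barrier: in place of the single half-disc family, install a $\theta_{2}$-parametrised family of Lagrangian barriers filling the slab, so that the holomorphic foliation used to build the isotopy is confined to the cylinder region $\theta_{2}\notin[-\delta',\delta']$ from the start. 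The second is a relative Moser correction: using that $L=Z$ over the slab and that $\phi_{t},\phi_{t}'$ are compactly supported and fix $Z$ there, correct the isotopy by a compactly supported Hamiltonian supported in the annular region $\delta'\le|\theta_{2}|\le\delta$ so as to cancel all motion over the central slab while preserving the endpoint condition $\phi_{1}(L)=Z$. Keeping either correction Hamiltonian and endpoint-preserving, while genuinely killing the motion over $\theta_{2}\in[-\delta',\delta']$ rather than only along the walls, is the crux of the argument.

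Finally, once the isotopy is supported over $\theta_{2}\notin[-\delta',\delta']$ and equals the identity over the cap slab, its restriction to $T^{*}(S^{1}\times I)$ is a Hamiltonian isotopy of the Lagrangian cylinder to the zero section that is fixed near the two boundary circles $\theta_{2}=\pm\delta'$, which is precisely the reduction to the cylinder described before the statement.
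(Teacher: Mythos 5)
Your setup is correct and your diagnosis of the difficulty is exactly right: applying Theorem 16 at $\mathbf{p}^{0}=(0,0)$, and its conjugate under $R(\theta_{1},\theta_{2},p_{1},p_{2})=(\theta_{1},-\theta_{2},p_{1},-p_{2})$, only pins the isotopy along the half-walls $W_{-}$ and $W_{+}$, and two separate isotopies each fixed on one half-wall do not combine into a single isotopy fixed on both, let alone on the whole fibre region over the slab. But your proposal stops precisely at the step that constitutes the theorem: both of your ``routes'' are left as hopes, and route (b) would in fact fail. Since only $W_{-}$ (resp.\ $W_{+}$) is a barrier for $\phi_{t}$ (resp.\ $\phi_{t}'$), the image $\phi_{t}(L)$ may at intermediate times sweep through the region $T^{*}\{|\theta_{2}|\le\delta'\}$; truncating or ``correcting'' the Hamiltonian on a region through which the Lagrangian passes changes the flow lines there and destroys the endpoint condition $\phi_{1}(L)=Z$. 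There is no Moser-type argument that cancels the motion of a flow on a prescribed open set while preserving a global endpoint constraint on a Lagrangian that enters and leaves that set. So the symmetry step is a dead end and route (b) is not salvageable; the entire weight of the proof rests on route (a), which you state in one sentence but do not execute.

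Route (a) is indeed what the paper does, and the concrete mechanism you are missing is positivity of intersection against a \emph{two-sided} family of holomorphic barrier cylinders. One does not use Theorem 16 as a black box; one re-runs the proof of Theorem B in \cite[Section 9]{key-9}. The hypothesis that $L$ equals the zero section over $\theta_{2}\in[-\delta,\delta]$ provides not only the family $C(e^{is})=S^{1}\times\{e^{is}\}\times(-\infty,0)\times\{0\}$ of punctured holomorphic discs with boundary on $L$, but also properly embedded cylinders $C(p_{2},e^{is})=S^{1}\times\{e^{is}\}\times\mathbb{R}\times\{p_{2}\}$ in the complement of $L$ for \emph{every} $p_{2}\neq0$, on both sides $p_{2}<0$ and $p_{2}>0$; in \cite{key-9} only one side is available, which is exactly why the support statement of Theorem 16 is one-sided. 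Choosing the compatible almost complex structure to agree with the cylindrical one near all of these, the solid torus with core removed $\dot{\mathcal{T}}$ foliated by holomorphic curves with boundary on $L$ --- the object from which the Hamiltonian isotopy is actually constructed, so that the support of the isotopy is controlled by $\dot{\mathcal{T}}$ --- must be disjoint from every $C(p_{2},e^{is})$ by the positivity-of-intersection argument of Lemma 9.8(2) in \cite{key-9}. These cylinders together with the leaves $C(e^{is})$ fill the fibre region over the slab except for $\{p_{2}=0,\,p_{1}>0\}$, and that remaining region is excluded by a compactness and continuity argument (a leaf meeting it would either lie inside it, contradicting compactness, or meet the nearby barriers $C(p_{2},e^{is})$). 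Hence the foliation, and therefore the isotopy, is standard over the entire slab. This confinement argument, which your barriers are meant to effect but are never constructed to carry out (note also that they must be holomorphic, not Lagrangian, for positivity of intersection to apply), is the missing core of the proof.
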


\begin{proof}
\noindent We follow exactly the same steps as the proof of theorem
B in \cite[Section 9]{key-9}. 

\noindent We prove by constructing a solid torus with core removed
which is foliated by pseudo-holomorphic punctured discs. 

\noindent The main step of the proof is to construct a proper embedding
of a solid torus $\dot{\mathcal{T}}\subset T^{*}T^{2}$ with its core
removed, which is foliated by pseudoholomorphic discs, and whose boundary
is the Lagrangian $L$. For technical reasons this solid torus with
core removed is constructed as a solid torus $\mathcal{T}\subset\mathbb{CP}^{1}\times\mathbb{CP}^{1}$
foliated by pseudoholomorphic discs, which gives rise to $\dot{\mathcal{T}}$
after removing the four holomorphic lines $\left(\left\{ 0,\infty\right\} \times\mathbb{CP}^{1}\right)\cup\left(\mathbb{CP}^{1}\times\left\{ 0,\infty\right\} \right)$.
Recall that the complement of this divisor is a neighbourhood of the
zero section of $T^{*}T^{2}$. In order to obtain the solid torus
with the sought properties it is crucial that we have a family of
(punctured) pseudoholomorphic discs that we can start with. Namely,
one starts from the family of punctured pseudo-holomorphic discs 
\[
C\left(e^{is}\right)=S^{1}\times\left\{ e^{is}\right\} \times\left(-\infty,p_{1}^{0}\right)\times\left\{ p_{2}^{0}\right\} \subset T^{*}\mathbb{T}^{2}\setminus L
\]
with boundary on $L$ that exists by the assumption that $L$ is standard
above the neighborhood $\theta_{2}\in\left(-\delta,\delta\right)$.
We proceed to explain how the additional assumptions made in Theorem
16 here give us additional control over the solid torus, as compared
to the assumptions in Theorem 15 above which was proven in \cite{key-9}.
The goal is to use positivity of intersection to say that $\mathcal{\dot{T}}$
is standard above the region $\theta_{2}\in\left(-\delta,\delta\right)$. 

\noindent In \cite[Theorem B(2)]{key-9}, the condition about the
intersection with the Lagrangian disc $\dot{D}_{\mathbf{P}^{0}}\left(\theta\right)$,
says that the Hamiltonian isotopy is ``one-sided'' fixed near some
Lagrangian disc, in the sense that the subset $\cup_{\mid s\mid<\delta}\dot{D}_{\mathbf{P}^{0}}\left(e^{is}\right)$
contains only $p_{2}<p_{2}^{0}$, but not $p_{2}>p_{2}^{0}$ and $p_{2}=p_{2}^{0}$.
As in Section 9.1 of \cite{key-9}, denote a smooth one-dimensional
family of embedded symplectic cylinders by 
\[
C\left(e^{is}\right)=S^{1}\times\left\{ e^{is}\right\} \times\left(-\infty,p_{1}^{0}\right)\times\left\{ p_{2}^{0}\right\} \subset T^{*}\mathbb{T}^{2}\setminus L
\]
where $L$ is the Lagrangian torus, and for $p_{2}<p_{2}^{0}$, $p_{2}>p_{2}^{0}$,
there are cylinders 
\[
C\left(p_{2},e^{is}\right)=S^{1}\times\left\{ e^{is}\right\} \times\mathbb{R}\times\left\{ p_{2}^{0}\right\} \subset T^{*}\mathbb{T}^{2}\setminus L.
\]
Together, we can find a well-defined compatible almost complex structure
$J$ on $\left(T^{*}\mathbb{T}^{2}\setminus L,d\lambda_{\mathbb{T}^{2}}\right)$
which is cylindrical outside of a compact subset, and which agrees
with $J_{cyl}$ in a neighborhood of the union of cylinders. In \cite{key-9},
only $p_{2}<p_{2}^{0}$ was considered, however, if we look at Figure
5,2.16 in \cite{key-9}, there is also space for cylinders corresponds
to $p_{2}>p_{2}^{0}$ above the $p_{2}$ plane. This property is a
direct consequence of the assumption that $L$ coincides with the
zero section above $\theta_{2}\in\left[-\delta,\delta\right]$.

\noindent Let us consider the family of cyliders
\[
C\left(e^{is}\right)\cup C\left(p_{2},e^{is}\right),\mid s\mid\leq\delta,p_{2}<p_{2}^{0},p_{2}>p_{2}^{0}.
\]

\noindent 
\begin{figure}

\includegraphics[scale=0.8]{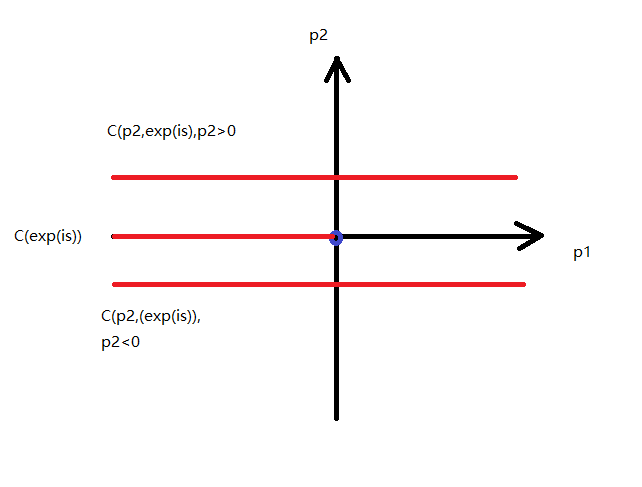}\caption{Figure 16 in \cite{key-9}, where the blue origin point is the only
intersection, red lines are leaves of $\mathcal{\dot{T}}$.}

\end{figure}

\noindent \LyXZeroWidthSpace{}

\noindent The argument in \cite[Section 9]{key-9} produces an embeded
solid torus $\mathcal{\dot{T}}$ inside $T^{*}\mathbb{T}^{2}$ with
core removed, whose boundary is $L$, and which is foliated by pseudo-holomorphic
cylinders with boundary on $L$. Part of these cylinders are given
by the explicitly constructed standard cylinders $C\left(e^{is}\right)$.
After compactifying $T^{*}\mathbb{T}^{2}$ to $S^{2}\times S^{2}$,
$\mathcal{\dot{T}}$ becomes a solid torus. The positivity of intersection
argument in Lemma 9.8(2) of \cite{key-9} which shows that the solid
torus is disjoint from $C\left(p_{2},e^{is}\right)$ also shows that
the interior of these solid tori (which are foliated by pseudo-holomorphic
curves) are disjoint from the cylinders $C\left(p_{2},e^{is}\right)$.
In particular this solid torus coinsides with the union of standard
cylinders $C\left(e^{is}\right)$ at origin but not in $p_{1}>0,p_{2}=0$
inside the subset $\theta_{2}\in\left[-\delta,\delta\right]$, because
if so, either it is contained inside the domain, or it intersects
the domain at a discrete subset. The former is impossible, by compactness,
and the latter is also impossible, because it will also intersect
the nearby cylinders $C\left(p_{2},e^{is}\right)$ by continuity.
Hence $\mathcal{\dot{T}}$, and the isotopy, intersects 
\[
C\left(e^{is}\right)\cup C\left(p_{2},e^{is}\right),\mid s\mid\leq4\delta,p_{2}<p_{2}^{0},p_{2}>p_{2}^{0}
\]
 exactly at the origin(s) of the Figure, in other words, we get an
isotopy of the cylinder which is supported in the cotangent bundle
of the cylinder. 
\end{proof}
\noindent \newpage{}

\section{Local Unknottedness of Two-punctured Planes}

\noindent While the analog of Theorem 9 for one-punctured planes (i.e.
cylinder) is shown in Section 4.1, namely any Lagrangian inside $T^{*}\left(\mathbb{R}^{2}-\left\{ 0\right\} \right)$
which coincides with the zero-section outside of a compact subset
is Hamiltonian isotopic to the zero section, the analogous result
for $T^{*}\left(\mathbb{R}^{2}-\left\{ 0,1\right\} \right)$ (i.e.
pair of pants) is unknown. Rather than Hamiltonian isotopy, we will
try to prove the smooth version in this section. 

\noindent Observe that here ``compact'' requires the isotopy to
be fixed not only near $\infty$, but also 0 and 1. So we add back
0-fibre and 1-fibre, use the same idea of modification of $\omega$
to make the Lagrangian symplectic and compactification as the non-punctured
case to construct an isotopy fixed at infinity. However, more needs
to be done to make this isotopy fixed near 0 and 1. As an analog of
Lemma 11 and 12, we have
\begin{lem}
\noindent There exists a diffeomorphism of $T^{*}\mathbb{R}^{2}=\mathbb{R}^{4}$
which takes the zero section to the complex line $\gamma_{0}$, the
Lagrangian $L$ to a symplectic surface which coincides with a complex
line $\gamma_{0}$ outside a compact subset, while the fibres over
$0$ and $1$ become mapped to two disjoint symplectic lines $\Gamma_{0}$
and $\Gamma_{1}$, respectively, that intersects $\gamma_{0}$ transversely
and positively in a single point, each are complex near $\gamma_{0}$
and outside a compact subset. 
\end{lem}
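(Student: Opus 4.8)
\noindent The plan is to reproduce the construction of Lemmas 11 and 12 for $L$ while simultaneously turning the two cotangent fibres into complex lines. First I would record two facts that make the extra data harmless. Since $L\subset T^{*}(\mathbb{R}^{2}-\{0,1\})$ it is disjoint from the fibres $F_{0},F_{1}$ over the two punctures, and since $L$ is flat outside a compact subset of the punctured base it coincides with the zero section $l$ both near the punctures and near infinity; thus the genuinely knotted part of $L$ lies over a compact set $K\subset\mathbb{R}^{2}$ bounded away from $0,1,\infty$, in particular away from $F_{0}\cup F_{1}$. The target configuration is the standard one: fixing the orientation-preserving identification $\mathbb{R}^{4}=\mathbb{C}^{2}$ with $z_{1}=q_{1}+iq_{2}$, $z_{2}=p_{2}+ip_{1}$, the zero section becomes $\gamma_{0}=\{z_{2}=0\}$ and the fibres over $0$ and $1$ become the parallel complex lines $\Gamma_{0}=\{z_{1}=0\}$ and $\Gamma_{1}=\{z_{1}=1\}$, which are disjoint and each cross $\gamma_{0}$ exactly once.

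\noindent The difficulty is that, although $l,F_{0},F_{1}$ are already complex lines for this identification, they are all Lagrangian for $\omega$, and the single-sided modification $\omega_{\varepsilon}=\omega+\varepsilon\,dq_{1}\wedge dq_{2}$ of Lemma 11 restricts to $0$ on every fibre and hence leaves the fibres Lagrangian. I would therefore use the \emph{two-sided} modification
\[
\Omega=\omega+\varepsilon\,dq_{1}\wedge dq_{2}-\varepsilon'\,dp_{1}\wedge dp_{2},\qquad\varepsilon,\varepsilon'>0\ \text{small}.
\]
Its restriction to $l$ is $\varepsilon\,dq_{1}\wedge dq_{2}$ and to each fibre is $-\varepsilon'\,dp_{1}\wedge dp_{2}$, so all of $l,F_{0},F_{1}$ become $\Omega$-symplectic, while $\Omega$ stays non-degenerate. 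The \textbf{crucial and most delicate point} is the sign of the fibre term: with the minus sign above, $\Omega$ still tames the complex structure $J_{0}$ determined by the identification (the symmetric part of $\Omega(\cdot,J_{0}\cdot)$ is positive definite), whereas the opposite sign either fails to tame $J_{0}$ or reverses the orientation and forces $\Omega$ to degenerate. This is exactly the obstruction I expect to be the heart of the matter: one symplectic form must make $l$, $L$ and both fibres symplectic at once, and at the same time tame the complex structure for which $\gamma_{0},\Gamma_{0},\Gamma_{1}$ are complex lines, and this pins down both the two-sided shape and the sign of the modification.

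\noindent Next I would make $L$ symplectic exactly as in Lemma 11. Because the knotted part of $L$ sits over $K$, disjoint from $F_{0}\cup F_{1}$, I can choose on a tubular neighbourhood $V$ of $L$ a closed $2$-form restricting to an area form on $L$ and agreeing with the reference area form outside a compact set, yielding a compactly supported exact correction $\varepsilon\,d(\rho\lambda)$, supported in $V$ and hence disjoint from the fibres, for which $L$ is symplectic. Since the correction is exact and compactly supported, the linear Moser argument (Theorem 2, as in Lemma 11) produces a compactly supported isotopy, supported away from $F_{0},F_{1}$ and arranged to fix $l$, carrying $L$ to an $\Omega$-symplectic surface $L'$ that still coincides with $\gamma_{0}$ outside a compact set. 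As in Lemma 12 I would then interpolate $\Omega$ to a multiple of the Fubini--Study form of $J_{0}$ outside a compact set by taking $\tfrac{i}{2}\partial\overline{\partial}$ of a convex radial interpolation, which keeps the taming condition, extending the almost complex structure where necessary by the contractibility of Lemma 4.

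\noindent The resulting diffeomorphism of $\mathbb{R}^{4}$ then carries $l$ to $\gamma_{0}$, carries $L$ to a symplectic surface coinciding with $\gamma_{0}$ outside a compact set, and carries $F_{0},F_{1}$ to $\Gamma_{0},\Gamma_{1}$. Disjointness of $\Gamma_{0}$ and $\Gamma_{1}$ is inherited from that of the fibres, and each $\Gamma_{i}$ meets $\gamma_{0}$ in the single point coming from $l\cap F_{i}$; since near that point $\gamma_{0}$ and $\Gamma_{i}$ are genuine $J_{0}$-complex lines and $\Omega$ tames $J_{0}$, the intersection is automatically transverse and positive by Theorem 7. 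The lines $\Gamma_{i}$ are complex near $\gamma_{0}$ and near infinity, where $\Omega$ tames $J_{0}$ and they are $J_{0}$-complex, and merely symplectic over the intermediate region, precisely as required. The main obstacle, to summarise, is the orientation-sensitive design of the single form $\Omega$: the single-sided modification of Lemma 11 is not enough, and only the correctly signed two-sided modification keeps the form non-degenerate, taming, and simultaneously adapted to all four submanifolds.
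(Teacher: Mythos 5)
Your diagnosis of the new difficulty is exactly right: the one-sided modification $\omega_{\varepsilon}=\omega+\varepsilon\,dq_{1}\wedge dq_{2}$ of Lemma 11 restricts to zero on the cotangent fibres, so it cannot make them symplectic. But your solution diverges from the paper's. After the common first step (Moser's trick with the correction $d(\rho\lambda)$ supported away from the fibres), the paper \emph{keeps} the one-sided form $\omega_{\varepsilon}$ and instead moves the fibres: it tilts them through the explicit linear family $\Gamma_{i}^{t}$, which is Lagrangian at $t=0$ and $\omega_{\varepsilon}$-symplectic for small $t>0$, then applies a linear symplectomorphism and the deformation result \cite[Proposition 4.9]{key-9} to make the tilted planes complex near $\gamma_{0}$ and near infinity; the (almost) complex structure making all the lines holomorphic is chosen \emph{afterwards}, adapted to the configuration, by contractibility of the space of compatible structures. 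You instead keep the zero section and the fibres pointwise fixed and build a single two-sided form $\Omega=\omega+\varepsilon\,dq_{1}\wedge dq_{2}-\varepsilon'\,dp_{1}\wedge dp_{2}$ taming the fixed integrable $J_{0}$. Your linear algebra here is correct: with $z_{1}=q_{1}+iq_{2}$, $z_{2}=p_{2}+ip_{1}$ one has $\omega=-\mathrm{Im}(dz_{1}\wedge dz_{2})$, a $(2,0)+(0,2)$ form, so $\omega(v,J_{0}v)\equiv0$ and $\Omega(v,J_{0}v)=\varepsilon(b^{2}+d^{2})+\varepsilon'(a^{2}+c^{2})>0$, while the opposite sign on the $dp_{1}\wedge dp_{2}$ term is indeed indefinite. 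This route buys a cleaner endgame: $\gamma_{0},\Gamma_{0},\Gamma_{1}$ are globally complex lines, so no analogue of the \cite[Proposition 4.9]{key-9} standardization is needed.

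However, there is a genuine gap at your compactification step, and it is precisely the price of insisting on the standard $J_{0}$. You propose to interpolate $\Omega$ to $C\,\omega_{FS}$ ``by taking $\tfrac{i}{2}\partial\overline{\partial}$ of a convex radial interpolation,'' but $\Omega$ is not $\tfrac{i}{2}\partial\overline{\partial}$ of any function: forms of that type are always $(1,1)$ with respect to $J_{0}$, whereas $\Omega$ contains the nonzero $(2,0)+(0,2)$ component $\omega$ itself. The paper's Lemmas 12--13 do not face this problem because there the complex coordinates are chosen \emph{adapted} to $\omega_{\varepsilon}$ (namely $z_{1}\sim(p_{1}-\tfrac{\varepsilon}{2}q_{2},q_{1})$, etc.), making $\omega_{\varepsilon}$ literally the flat K\"ahler form; that choice is unavailable to you, since it destroys the complexity of the fibres, which is the whole point of your construction. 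Note also that $\omega$ is exact, so it cannot survive to a symplectic form on $\mathbb{CP}^{2}$: the $(2,0)+(0,2)$ part \emph{must} be killed before adding $\Gamma_{\infty}$. The fix is to cut it off separately: write $\omega=d\beta$ with $\beta=-\mathrm{Im}(z_{1}\,dz_{2})$, replace $\omega$ by $d(\chi\beta)$ where $\chi=\eta(\log\Vert z\Vert)$ interpolates from $1$ to $0$ over an annulus $R\leq\Vert z\Vert\leq R'$; then $d(\chi\beta)=\chi\omega+d\chi\wedge\beta$, the term $\chi\omega$ contributes nothing to taming, and the error $d\chi\wedge\beta$ has norm $O(1/\log(R'/R))$, which is dominated by the positive $(1,1)$ part of size $\min(\varepsilon,\varepsilon')$ once $R'/R$ is large; after that the remaining form is genuinely $(1,1)$ with potential $\varepsilon|z_{1}|^{2}+\varepsilon'|z_{2}|^{2}$ and Lemma 12's interpolation with $C\log(1+\Vert z\Vert^{2})$ applies. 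A second, smaller omission: $\varepsilon'$ must be chosen small relative to $\varepsilon$ and to the correction $\tau$, since $-\varepsilon'\,dp_{1}\wedge dp_{2}$ restricted to the knotted (compact) part of $L$ competes with $\varepsilon\,\tau|_{L}$, and only the ratio being small guarantees that the Moser-perturbed $L$ is actually $\Omega$-symplectic. So the sign issue you single out is necessary but easy; for your route the real unaddressed work is making a taming-but-not-K\"ahler form standard at infinity.
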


\begin{proof}
\noindent First, we perturb $L$ to a symplectic surface $\alpha$
equal to $\gamma_{0}$, (i.e. the zero section, which is a symplectic
linear plane for the symplectic form $\omega_{\varepsilon}$) near
fibres and outside a compact subset. We construct this perturbation
by an application of Moser's trick as in Section 3, on a closed tubular
neighborhood $V$ of $L$, one can find a closed 2-form $\tau$ such
that there exists a 1-form $\lambda$ which is compactly supported
on $V$ and vanishing near the fibres $\sigma_{0}=\left\{ q_{1}=q_{2}=0\right\} ,\sigma_{1}=\left\{ q_{1}=1,q_{2}=0\right\} $
with
\[
d\lambda=\tau-dq_{1}\land dq_{2},
\]
which coincides with $dq_{1}\land dq_{2}$ outside a compact subset
of $V$ and makes $L$ $\tau-$symplectic. Choose a bump function
$\rho$ on $\mathbb{R}^{4}$ which vanishes outside $V$, near the
Lagrangian fibres $\sigma_{0}=\left\{ q_{1}=q_{2}=0\right\} ,\sigma_{1}=\left\{ q_{1}=1,q_{2}=0\right\} $,
and equals 1 near the neighborhood where $L$ is Lagrangian. Set
\[
\widetilde{\omega_{\varepsilon}^{\prime}}=\omega+\varepsilon\left(dq_{1}\land dq_{2}+d\left(\rho\lambda\right)\right).
\]
Then $L$ is $\widetilde{\omega_{\varepsilon}^{\prime}}$-symplectic.
Use Moser's trick to deform $L$ to an $\omega_{\varepsilon}$-symplectic
plane $\alpha$ by a smooth isotopy which is supported in the complement
of $\sigma_{0}$, $\sigma_{1}$ defined above. 

\noindent Second, we perturb Lagrangian fibres to symplectic planes.
This can be done via a family of parallel 2-planes that intersect
$\alpha$ transversely at two points. Namely, in the standard linear
space $\mathbb{R}^{4}$ with coordinates $\left(p_{1},q_{1},p_{2},q_{2}\right)$
such that the symplectic form 
\[
\omega_{\varepsilon}=\left(dp_{1}-\frac{\varepsilon}{2}dq_{2}\right)\wedge dq_{1}+\left(dp_{2}+\frac{\varepsilon}{2}dq_{1}\right)\wedge dq_{2}
\]
Consider a family of parallel pair of 2-planes $\Gamma_{0}^{t}$,
$\Gamma_{1}^{t}$:
\[
\Gamma_{0}^{t}:=\left\{ \left(q_{1},q_{2}\right)^{T}=t\left(\begin{array}{cc}
0 & 1\\
-1 & 0
\end{array}\right)\cdot\left(p_{1},p_{2}\right)^{T}+\left(0,0\right)\right\} 
\]
\[
\Gamma_{1}^{t}:=\left\{ \left(q_{1},q_{2}\right)^{T}=t\left(\begin{array}{cc}
0 & 1\\
-1 & 0
\end{array}\right)\cdot\left(p_{1},p_{2}\right)^{T}+\left(1,0\right)\right\} 
\]
where $\Gamma_{0}^{0}$ is the Lagrangian fibre $\sigma_{0}=\left\{ q_{1}=q_{2}=0\right\} $,
and $\Gamma_{1}^{0}=\sigma_{1}=\left\{ q_{1}=1,q_{2}=0\right\} .$
When $t=0$, $\Gamma_{0}^{0}$, $\Gamma_{1}^{0}$ are a pair of Lagrangian
planes for the symplectic form $\omega_{\varepsilon}$, when $0<t\leq t_{0}$
for some $t_{0}>0$ small, $\Gamma_{0}^{t}$, $\Gamma_{1}^{t}$ are
a pair of symplectic planes for the symplectic form $\omega_{\varepsilon}$.
If needed, one can rescale the $p_{i}$ coordiantes to flatten the
Lagrangian $L$ so that $\Gamma_{0}^{t}$, $\Gamma_{1}^{t}$ will
not intersect $L$ at points other than our desired two transversal
intersection points. 

\noindent Third, map $\gamma_{0}$ to the complex line $\mathbb{C}\times0$
by a linear symplectomorphism. Meanwhile, denote the symplectic images
of $\gamma_{0}$, $\Gamma_{0}^{t_{0}}$, $\Gamma_{1}^{t_{0}}$ by
the same notations. One can obtain a smooth family $\Sigma_{s}$ of
symplectic immersions, where $\Sigma_{0}=\gamma_{0}\cup\Gamma_{0}^{t_{0}}\cup\Gamma_{1}^{t_{0}}$
with fixed intersection points $pt_{0},pt_{1}$ on $\Gamma_{0}^{t_{0}}$,
$\Gamma_{1}^{t_{0}}$ respectively. 

\noindent Finally, use \cite[Proposition 4.9]{key-9} to obtain a
new deformation $\widetilde{\Sigma_{t}}$ through symplectic immersions
with exactly two transverse double points, such that $\widetilde{\Sigma_{0}}=\Sigma_{0}$
and that the deformation fixes $\gamma_{0}$ and the positions $pt_{0}$
and $pt_{1}$ of the double points and where the deformation has support
near the double points and near $\infty$. After such a deformation
we may assume that the sought properties are satisfied.
\end{proof}
\noindent Again as Lemma 12, take an interpolerated symplectic form
and compactify $\mathbb{R}^{4}$ into $\left(\mathbb{CP}^{2},\Omega,J_{0}\right)$
by adding $\Gamma_{\infty}$. The complex lines $\Gamma_{0}$, $\Gamma_{1}$
from the previous proposition become symplectic spheres in the projective
plane. Denote them again by $\Gamma_{0}$, $\Gamma_{1}$. Moreover,
we may assume that this $J_{0}$ makes $\gamma_{0}$$,\Gamma_{0},\Gamma_{1},\Gamma_{\infty}$
simultaneously J-holomorphic. The symplectic plane $\alpha$ becomes
an embedded symplectic sphere, denoted by $\alpha$ again, which intersects
$\Gamma_{\infty}$, $\Gamma_{0}$, $\Gamma_{1}$ at the same points
as $\gamma_{0}$ does. Denote the intersections by $\infty,$ $pt_{0}$,
$pt_{1}$ respectively. See Figure 5,1. In the following we abuse
notation and use $\Gamma_{0}$, $\Gamma_{1}$ in order to denote the
compactifications of the symplectic planes to symplectic degree one
spheres in $\mathbb{CP}^{2}.$

\noindent 
\begin{figure}

\includegraphics{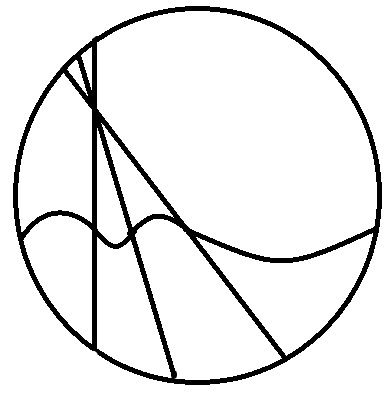}\caption{Vertical lines are $\Gamma_{\infty}$, $\Gamma_{0}$, $\Gamma_{1}$,
they intersect at the same point $x_{0}$. The horizonal line is $\alpha$
which intersect $\Gamma_{\infty}$, $\Gamma_{0}$, $\Gamma_{1}$ at
$\infty,$ $pt_{0}$, $pt_{1}$ respectively. }

\end{figure}

\noindent \LyXZeroWidthSpace{}

\noindent Note the following: $\Gamma_{i}$ are complex planes which
are linear outside a compact subset and near $\mathbb{C}\times0.$
The key point to our proof is that we can interpolate through compatible
almost complex structures that keep $\Gamma_{\infty}$, $\Gamma_{0}$,
$\Gamma_{1}$ J-holomorphic, from one that makes $\alpha$ J-holomorphic,
to one that makes $\gamma_{0}$ J-holomorphic. 
\begin{thm}
\noindent There exists a smooth isotopy of $\mathbb{CP}^{2}$ that
takes $\alpha$ to $\gamma_{0}$ which fixes the line $\Gamma_{\infty}$
pointwise, keeps the intersection point between this isotopy's image
and $\Gamma_{\infty}$, $\Gamma_{0}$, $\Gamma_{1}$ at $\infty,$
$pt_{0}$, $pt_{1}$ respectively, and the image does not intersect
$\Gamma_{\infty}$, $\Gamma_{0}$, $\Gamma_{1}$ at other points.
Moreover, this isotopy can be taken to fix a neighborhood of $\Gamma_{\infty}$,
$\Gamma_{0}$, $\Gamma_{1}$. 
\end{thm}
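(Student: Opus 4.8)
The plan is to mimic the strategy of Theorem 18 (the proof of the pair-of-pants result given later), which itself generalizes the Eliashberg–Polterovich argument of Theorem 14, now carrying along the two extra fixed lines $\Gamma_0$ and $\Gamma_1$. First I would set up a smooth path $Q(t)$, $t\in[0,1]$, in $\mathbb{CP}^2$ joining a point of $\gamma_0$ to a point of $\alpha$, arranged to avoid $\Gamma_\infty$, $\Gamma_0$, $\Gamma_1$ except with the prescribed transverse behavior. Using the contractibility of the space of compatible almost complex structures (Lemma 6), I would build a smooth family $J(t)$ of compatible almost complex structures with $J(0)=J_0$, with $\alpha$ being $J(1)$-holomorphic, and — crucially — with $\Gamma_\infty$, $\Gamma_0$, $\Gamma_1$ all $J(t)$-holomorphic for every $t$. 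The point flagged just before the theorem statement, that the $\Gamma_i$ are complex near $\gamma_0$ (i.e.\ near $\mathbb{C}\times 0$) and outside a compact set, is exactly what lets me prescribe $J(t)$ along the three lines first and then extend it over $\mathbb{CP}^2$.

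Next, by Theorem 8 (existence and uniqueness of a $J(t)$-holomorphic line through two distinct points, depending smoothly on the data) I would take $C(t)$ to be the unique $J(t)$-holomorphic line through two chosen base points, say the point $x_0=\Gamma_\infty\cap\gamma_0$ and the moving point $Q(t)$, so that $C(0)=\gamma_0$ and $C(1)=\alpha$. The family $\{C(t)\}$ then furnishes a smooth isotopy from $\gamma_0$ to $\alpha$; reversing the parameter gives the isotopy from $\alpha$ to $\gamma_0$ asserted in the statement. The main work is then to control the intersections of $C(t)$ with each of the three fixed lines using positivity of intersection (Theorem 7). Since each $C(t)$ and each $\Gamma_i$ is $J(t)$-holomorphic, every geometric intersection contributes at least $+1$, and a non-transverse one contributes at least $+2$; but the homological intersection number is $[C(t)]\cdot[\Gamma_i]=1$, which is constant in $t$. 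Hence $C(t)$ meets each $\Gamma_i$ in exactly one transverse point, and by continuity from $t=0$ this point is forced to stay at $\infty$, $pt_0$, $pt_1$ respectively. This simultaneously gives the single-transverse-intersection conclusion at the three prescribed points and rules out intersections elsewhere.

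The hard part will be upgrading the family of leaves $\{C(t)\}$ to an honest \emph{ambient} smooth isotopy of $\mathbb{CP}^2$ that fixes $\Gamma_\infty$ pointwise (and in fact a neighborhood of each of $\Gamma_\infty$, $\Gamma_0$, $\Gamma_1$), rather than just a moving family of spheres. As in the proof of Theorem 14, I would invoke the pencil picture of Remark: the $J(t)$-lines through the fixed base point $x_0$ foliate $\mathbb{CP}^2$ away from $x_0$, and one can promote the deformation of a single leaf to an ambient isotopy by an isotopy-extension argument applied leafwise. To keep a neighborhood of the three lines fixed, I would — following the last paragraph of the proof of Theorem 14 — deform $Q(t)$ so that it agrees with $\gamma_0$ not just at $x_0$ but on a whole neighborhood, and arrange transversality of $Q(t)$ to each $\Gamma_i$, so that the resulting ambient isotopy is supported away from collars of $\Gamma_\infty$, $\Gamma_0$, $\Gamma_1$. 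The genuinely new difficulty compared with the one-line case is that I must preserve three holomorphic lines \emph{simultaneously} through a common point $x_0$ while keeping $C(t)$ a single line through $x_0$; ensuring that the common point $x_0$ of the three fixed lines is compatible with the base point of the pencil (so that the intersection points $pt_0,pt_1$ on $\alpha$ really are held fixed along the whole isotopy) is the step I expect to require the most care.
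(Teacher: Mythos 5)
Your setup (path of compatible almost complex structures keeping $\Gamma_\infty,\Gamma_0,\Gamma_1$ holomorphic, family of degree-one pseudo-holomorphic lines, positivity of intersection, ambient extension) is the same framework as the paper's, but the pivotal claim in your second paragraph is false: positivity of intersection gives that $C(t)$ meets each $\Gamma_i$ in exactly one transverse point \emph{varying continuously in $t$}, not that this point is constant. ``Continuity from $t=0$'' forces nothing --- the intersection point traces a path in $\Gamma_i$ that begins and ends at the prescribed point but can wander in between. Since a pseudo-holomorphic line is determined by exactly two points, by choosing base points you can pin at most two of the three required intersections; with your choice (the single point $\gamma_0\cap\Gamma_\infty$ together with the free point $Q(t)$, which lies on none of the three lines) you pin only \emph{one}, so both $C(t)\cap\Gamma_0$ and $C(t)\cap\Gamma_1$ can drift. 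This is not a removable technicality but precisely the difficulty the paper isolates: its family $\alpha_t$ is the $J_t$-line through $\infty$ and $pt_0$, which pins two of the three intersections, and the paper then states explicitly that ``$\alpha_t$ may not intersect $\Gamma_1$ at $pt_1$'' (Figure 5.2). Also note that in the paper $x_0$ denotes the common point $\Gamma_\infty\cap\Gamma_0\cap\Gamma_1$, not $\gamma_0\cap\Gamma_\infty$; conflating these obscures exactly the point at issue.

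The missing idea is the paper's two-parameter correction. For each $t$ it introduces a second isotopy parameter $s$: an auxiliary family of ambient isotopies $g(t,s,x)$, built from the pencil of $J_t$-lines through $x_0$, which corrects the discrepancy between the actual intersection $\alpha_t\cap\Gamma_1$ and the required point $pt_1$, and which is trivial at $t=0$ and $t=1$ (where $\alpha_t$ equals $\gamma_0$ or $\alpha$ and there is no discrepancy). The desired isotopy is then obtained by traversing the ``top path'' of the $(t,s)$-parameter square, i.e.\ by composing the line family with these correcting isotopies so that the intersection with $\Gamma_1$ is held at $pt_1$ throughout. Without some such correction --- equivalently, without composing your $C(t)$ with ambient isotopies that fix the three lines and the pinned points while dragging the drifting intersection point back --- your family cannot satisfy the ``keeps the intersection points at $\infty$, $pt_0$, $pt_1$'' clause, and the eventual application (an isotopy fixed near the fibres over $0$ and $1$) collapses. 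Your closing paragraph shows you sensed this was the delicate step, but the mechanism you offer for it (continuity plus homological intersection numbers) cannot supply it.
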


\begin{proof}
\noindent By Lemma 18, we have an almost complex structure $J_{0}$
making $\gamma_{0},\Gamma_{0},\Gamma_{1},\Gamma_{\infty}$ simultaneously
$J_{0}$-holomorphic. There is also an almost complex structure $J_{1}$
making $\alpha,\Gamma_{0},\Gamma_{1},\Gamma_{\infty}$ simultaneously
$J_{1}$-holomorphic, again as argued in Theorem 14 by extending a
section of a bundle with contractible fibres. Choose the path $J_{t}$
of almost complex structures for which $\Gamma_{0},\Gamma_{1},\Gamma_{\infty}$
remain $J_{t}$-holomorphic for all times t. Consider the unique $J_{t}$-holomorphic
line, denoted by $\alpha_{t}$, passing through the two intersection
points $\infty,0$. We have $\alpha_{0}=\gamma_{0}$, $\alpha_{1}=\alpha$
and the family $\left\{ \alpha_{t},0\leq t\leq1\right\} $ gives the
smooth isotopy from $\alpha$ to $\gamma_{0}$. One can extend this
isotopy to an isotopy of $\mathbb{CP}^{2}$ that fixes $\Gamma_{0},\Gamma_{1},\Gamma_{\infty}$,
by \cite[Proposition 4.8]{key-9}. However, the problem is that $\alpha_{t}$
may not intersect $\Gamma_{1}$ at 1. See Figure 5,2. 

\noindent 
\begin{figure}

\includegraphics{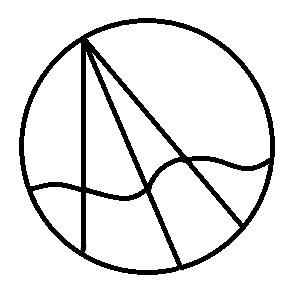}\caption{The intersection points of $\alpha_{t}$ and $\Gamma_{\infty}$, $\Gamma_{0}$
is $\infty$ and $pt_{0}$ respectively, but the intersection of $\alpha_{t}$
and $\Gamma_{1}$may not be $pt_{1}$.}

\end{figure}

\noindent Use parameter $t$ to denote the above isotopy process $f\left(t,x\right),t\in\left[0,1\right],x\in\mathbb{CP}^{2}$
from $\alpha$ to $\gamma_{0}$, with Figure 5,2 as an intermediate
status between them. 

\noindent For every tamed $J$, $\mathbb{CP}^{2}-\left\{ x_{0}\right\} $
can be foliated by $J-$holomorphic spheres that all intersect at
$x_{0}$. Especially, there is a unique $J_{t}$-holomorphic curve
passing through $x_{0}$ and $1$, denoted by $\Gamma^{t}$. There
is a smooth isotopy for each $t\in\left[0,1\right]$ that takes $\Gamma_{1}$
to $\Gamma^{t}$. Parameterize this isotopy by $s$. Together we have
an isotopy 
\[
g\left(t,s,x\right),t\in\left[0,1\right],s\in\left[0,1\right],x\in\mathbb{CP}^{2}
\]
with the property that $g\left(0,s,x\right)=x$, $g\left(1,s,x\right)=x$.
Take the top-path of the two-dimensional domain of $\left(s,t\right)$,
we get the desired smooth isotopy. See Figure 5.3. 

\noindent 
\begin{figure}

\includegraphics[scale=0.8]{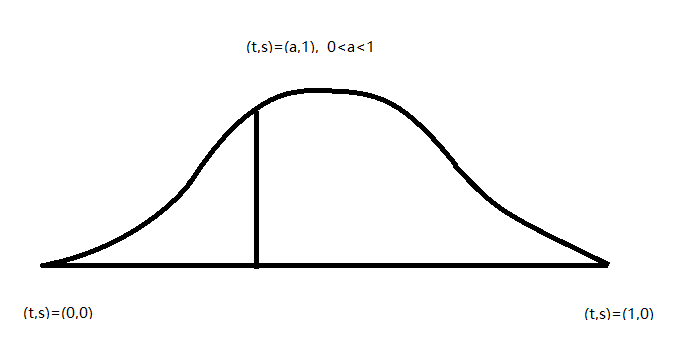}\caption{The two-dimendional domain of the parameter pair $\left(t,s\right)$.}

\end{figure}

\noindent Parameterize this path by $r$, finally we get a smooth
isotopy from $\alpha$ to $\gamma_{0}$, which keeps the intersection
points $\infty,$$\Gamma_{0}^{t_{0}}$, $\Gamma_{1}^{t_{0}}$. By
the same intersection argument as in section 3, this isotopy intersect
$\Gamma_{\infty}$, $\Gamma_{0}$, $\Gamma_{1}$ at no other points. 

\noindent A standard topological argument like that in the end of
Proposition 12 makes the smooth isotopy away from $\Gamma_{\infty}$,
$\Gamma_{0}$, $\Gamma_{1}$. Moreover, one can use the method of
\cite[Theorem 4.6]{key-9} to make the isotopy also symplectic.
\end{proof}

\end{document}